

\documentclass[12pt]{amsart}

\usepackage{amsmath,amssymb,amsthm,enumitem}

\usepackage{multirow}
\usepackage{tikz}
\usepackage{amsfonts}

\usetikzlibrary{matrix,arrows,decorations.pathmorphing,decorations.pathreplacing}

\usepackage{enumitem}
\usepackage{soul}

\makeatletter
\@namedef{subjclassname@2010}{%
  \textup{2010} Mathematics Subject Classification}
\makeatother

\DeclareMathOperator{\Gal}{Gal}
\DeclareMathOperator{\Res}{Res}

\def\Tr{\operatorname{Tr}}

\def \F {\mathbb F}
\renewcommand{\phi}{\varphi}

\usepackage{todonotes}

\newtheorem{theorem}{Theorem}[section]
\newtheorem*{thm}{Theorem}

\newtheorem{proposition}[theorem]{Proposition}
\newtheorem{lemma}[theorem]{Lemma}

\theoremstyle{definition}
\newtheorem{remark}[theorem]{Remark}

\def\cqfd{
{\hfill
\kern 6pt\penalty 500
\raise -1pt\hbox{\vrule\vbox to 5pt{\hrule width 4pt
\vfill\hrule}\vrule}}
\break}

\frenchspacing

\textwidth=13.5cm
\textheight=23cm
\parindent=16pt
\oddsidemargin=-0.5cm
\evensidemargin=-0.5cm
\topmargin=-0.5cm


\font\tengoth=eufm10
\font\sevengoth=eufm7
\font\fivegoth=eufm5
\newfam\gothfam
\textfont\gothfam=\tengoth\scriptfont\gothfam=\sevengoth\scriptscriptfont\gothfam=\fivegoth

\title[Maximal differential uniformity]{Polynomials with maximal differential uniformity and the exceptional APN conjecture}

\author[Aubry]{Yves Aubry}
\address[Aubry]{Institut de Math\'ematiques de Toulon - IMATH, Universit\'e de Toulon, France}
\address[Aubry]{Institut de Math\'ematiques de Marseille - I2M, Aix Marseille Univ, CNRS, Centrale Marseille, France}
\email{yves.aubry@univ-tln.fr}

\author[Herbaut]{Fabien Herbaut}
\address[Herbaut]{INSPE Nice-Toulon, Universit\'e C\^ote d'Azur, France}
\address[Herbaut]{Institut de Math\'ematiques de Toulon - IMATH, Universit\'e de Toulon, France}

\email{fabien.herbaut@univ-cotedazur.fr}

\author[Issa]{Ali Issa}
\address[Issa]{Institut de Math\'ematiques de Toulon - IMATH, Universit\'e de Toulon, France}
\email{ali.issa@univ-tln.fr}

\begin{document} 

\footnote{This work is partially supported by the French Agence Nationale de la Recherche through the SWAP project under Contract ANR-21-CE39-0012.}


\baselineskip=17pt


\begin{abstract}
We contribute to the 
exceptional APN conjecture
by
showing that
no polynomial of degree
$m=2^{r}(2^{\ell}+1)$ where $\gcd(r,\ell)\leqslant 2$,
  $r \geqslant 2$, $\ell\geqslant 1$ 
  with a nonzero second leading coefficient 
  can be 
  APN over infinitely many extensions of the base field.
More precisely, we prove that
for $n$ sufficiently large,
all polynomials of ${\mathbb F}_{2^n}[x]$
of such a degree  with a nonzero second leading coefficient have a
differential uniformity  equal to $m-2$.

\end{abstract}

\date{\today}

\subjclass[2010]{Primary 11R58 ; Secondary 11T06, 11T71.}

\keywords{Differential uniformity, exceptional APN functions, Chebotarev theorem}

\maketitle

\section{Introduction}

\subsection{Statement of results}
The notion of differential uniformity is introduced by
Nyberg in \cite{Nyberg}
to measure
the resistance 
of mappings between finite Abelian groups
against differential cryptanalysis.
In the context of a finite field ${\mathbb F}_q$  
one defines
the differential uniformity of a polynomial $f \in {\mathbb F}_{q}[x]$  
as
the greatest number of solutions 
of the set of equations $f(x+ \alpha) - f(x) = \beta$
where $\alpha$
and $\beta$ belong to ${\mathbb F}_q$ with $\alpha$ nonzero:
$$\delta_{{\mathbb F}_q}(f):=\max_{(\alpha,\beta)\in{\mathbb F}_q^{\ast}\times{\mathbb F}_q}\sharp\{x\in{\mathbb F}_{q} \mid f(x+\alpha)-f(x)=\beta\}.$$

Particular emphasis
is being placed on the even characteristic 
and this is the framework 
that we will consider here.
The differential uniformity is then obviously even and  its smallest value is  2.  
Polynomials $f \in {\mathbb F}_{2^n}[x]$ such that $\delta_{{\mathbb F}_{2^n}}(f)=2$
are highly relevant in cryptography and
are called APN polynomials (for Almost Perfect Nonlinear).

APN polynomials which are 
 APN over infinitely many extensions of the base field
have also attracted some attention
and they are called exceptional APN.
Dillon has conjectured in \cite{Dillon}
that the only monomials
among them
have degrees 
 $2^k+1$ and $2^{2k}-2^k+1$ 
(which  
are called Gold and Kasami-Welch numbers respectively).
The conjecture has been 
 resolved  
by Hernando and McGuire in \cite{H-McG}.

Thereafter,
the first author, McGuire and Rodier 
have conjectured  in \cite{A-McG-R} that
up to the CCZ equivalence
(an equivalence relation
introduced by Carlet, Charpin and Zinoviev in \cite{CCZ} and discussed in 
\cite{BCP})
these 
 monomials 
 are the only exceptional APN {\sl polynomials}.
 This statement is now referred to as the Aubry-McGuire-Rodier conjecture.
In this direction they
have established
that if the degree of a polynomial $f$ 
is odd
but
neither a Gold nor 
a Kasami-Welch number,
 then $f$ is not exceptional APN.
From there some authors have focused
on the study 
of polynomials of degree Gold  or Kasami-Welch
(see for instance the survey  \cite{Delgado} of Delgado).

Few is known 
about the even degree case.
The first author, McGuire and Rodier
have proved   in \cite{A-McG-R} that
if $f$ is a polynomial of degree $2e$
with $e$ odd
and  if
$f$ contains a term of odd degree
then 
$f$ is not exceptional APN.
Moreover, Bartoli and Schmidt have stated 
in Proposition 1.4 in \cite{B-S} 
that if a polynomial of
even degree $m$ is exceptional APN then $m\equiv 0 \pmod 4$.

The case where $f$ has degree $4e$ with $e\equiv 3 \pmod 4$ 
and satisfies an extra condition
 has been 
 studied by Rodier in \cite{Rodier}.
Caullery has handled in \cite{Caullery} the case where 
$f$ has degree $4e$ with $e>3$ such that $\phi_e$ is absolutely irreducible, 
where
$$\phi_e(x,y,z):=\frac{x^e+y^e+z^e +(x+y+z)^e}{(x+y)(x+z)(y+z)}.$$
Results  on the absolute irreducibility of the polynomials $\phi_e$ are for example compiled in
\cite{J-McG-W}.
 As pointed in the proof of Lemma 2.2 in  \cite{A-McG-R}
the polynomial $\phi_e$ is not absolutely irreducible when $e$ is even,
so the case of polynomials of degree $m\equiv 0 \pmod 8$ is still open.
Moreover, the polynomial $\phi_e$ is not absolutely  irreducible if $e$ is a Gold number as shown by
Janwa and Wilson in Theorem 4 in \cite{J-W} 
so the case of degree $4e$ with $e$ a Gold number is also still open.

\medskip

Rather, one can ask how large the differential uniformity can be.
Unless
$f$ is an additive polynomial plus a constant,
the maximal value 
that the differential uniformity of a
polynomial $f$ of degree $m$
can reach is the degree of the derivative $f(x+\alpha)-f(x)$
which is bounded by
$m-1$
if $m$ is odd and by $m-2$ otherwise.
Consequently we will say that a polynomial has a maximal differential uniformity
if this bound is reached.

A density result has first been established  in this direction
in  \cite{Felipe} where
Voloch
has proved 
that {\sl most} polynomials of degree $m$  congruent to $0$ or $3$ modulo $4$
achieve this maximal differential uniformity.
One can also find a generalization to the second order differential uniformity in \cite{YvesFabien}.

Moreover, 
Voloch and the two first authors
provide in Theorems 5.3 and 5.5 in \cite{AHV} explicit infinite families of
odd integers $m$
such that 
{\sl all} polynomials of degree $m$ (and not just {\sl most of them})
have a
maximal differential uniformity for $n$ large enough.

The main  purpose of this paper is to extend these results to
 infinitely many explicit even degrees.
 
\begin{thm}(Theorem \ref{theorem:main})
Let  $m=2^{r}(2^{\ell}+1)$ where $\gcd(r,\ell)\leqslant 2$,
  $r \geqslant 2$ and $\ell\geqslant 1$.
For $n$ sufficiently large,
for all polynomials $f=\sum_{k=0}^{m}a_{m-k} x^{k}\in{\mathbb F}_{2^n}[x]$
of degree $m$ such that $a_1\not=0$ 
 the differential uniformity $\delta(f)$ is maximal
that is $\delta(f)=m-2$.

In particular, no polynomial
$f=\sum_{k=0}^{m}a_{m-k} x^{k}$ 
of such a degree and
 such that $a_1 \neq 0$
can be exceptional APN.

\end{thm}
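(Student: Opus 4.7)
The plan is to deduce the theorem from a Chebotarev-type density argument applied to the polynomial $D_\alpha f(x) := f(x+\alpha)+f(x)$ regarded as a polynomial in $x$ over the function field $\overline{\mathbb{F}_2}(\alpha,T)$. First, I would establish that $\deg D_\alpha f = m-2$ with leading coefficient $a_1\alpha$ whenever $\alpha\neq 0$. In characteristic two, the coefficient of $x^{m-1}$ vanishes since $m$ is even; the coefficient of $x^{m-2}$ is $a_0\binom{m}{2}\alpha^2 + a_1(m-1)\alpha$, and the hypothesis $r\geq 2$ forces $\binom{m}{2}=m(m-1)/2$ to be even while $m-1$ is odd, so it collapses to $a_1\alpha$. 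By Lucas's theorem applied to the binary expansion $m=2^r+2^{r+\ell}$, the contribution $a_0[(x+\alpha)^m+x^m]=a_0\bigl(\alpha^{2^r}x^{m-2^r}+\alpha^{m-2^r}x^{2^r}+\alpha^m\bigr)$ has $x$-degree at most $m-2^r\leq m-4$, so it does not interfere. This gives the trivial upper bound $\delta(f)\leq m-2$ and reduces the theorem to producing, for every sufficiently large $n$, a pair $(\alpha,\beta)\in\mathbb{F}_{2^n}^{\ast}\times\mathbb{F}_{2^n}$ for which $D_\alpha f(x)-\beta$ splits into $m-2$ distinct linear factors over $\mathbb{F}_{2^n}$.

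Next, I would cast this as a Galois problem. Let $K=\overline{\mathbb{F}_2}(\alpha,T)$ with $\alpha,T$ algebraically independent transcendentals, and let $G\subseteq S_{m-2}$ denote the Galois group of the splitting field of $D_\alpha f(x)-T$ over $K$. The aim is $G=S_{m-2}$. Transitivity is immediate: $T$ appears linearly, so by Gauss's lemma $D_\alpha f(x)-T$ is irreducible in $K[x]$. To produce a transposition I would analyze the ramification of the cover $\phi\colon\mathbb{A}^1_x\to\mathbb{A}^1_T$, $x\mapsto D_\alpha f(x)$, over $\overline{\mathbb{F}_2}(\alpha)$: at a finite critical value above which exactly one pair of roots coalesces while the others remain separate, the inertia group is cyclic of order $2$ and acts as a transposition on the fiber (the wildness of this ramification in characteristic two does not alter the cyclic-of-order-$2$ structure of the inertia). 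Primitivity of $G$ can then be verified by a direct analysis of the possible block decompositions of $\{1,\ldots,m-2\}$; this is where the hypothesis $\gcd(r,\ell)\leq 2$ plays its role, by ruling out imprimitive blocks that the $2$-adic factorizations of $m-2=2\bigl(2^{r-1}(2^\ell+1)-1\bigr)$ could otherwise supply. Jordan's classical criterion (a primitive subgroup of $S_N$ containing a transposition equals $S_N$) then yields $G=S_{m-2}$.

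With $G=S_{m-2}$ in hand, I would invoke the function-field Chebotarev density theorem -- equivalently, the Lang--Weil estimate on the geometrically irreducible surface $V\subset\mathbb{A}^{m-1}$ parametrising $(\alpha,x_1,\ldots,x_{m-2})$ with the $x_i$ pairwise distinct and $D_\alpha f(x_1)=\cdots=D_\alpha f(x_{m-2})$ -- to conclude that the number of pairs $(\alpha,\beta)\in\mathbb{F}_q^2$, $q=2^n$, for which $D_\alpha f(x)-\beta$ splits into $m-2$ distinct $\mathbb{F}_q$-rational roots equals $q^2/(m-2)!+O_m(q^{3/2})$, the implied constant depending only on $m$. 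For $n$ large enough this is strictly positive, yielding the desired $(\alpha,\beta)$ and forcing $\delta(f)=m-2>2$; in particular $f$ is not APN over $\mathbb{F}_{2^n}$ for any such $n$, and hence cannot be exceptional APN.

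The genuine obstacle is the Galois-group computation in characteristic two. Both ingredients -- exhibiting a transposition at some finite critical value and verifying primitivity -- demand a delicate study of the critical locus of $D_\alpha f$ and of its interaction with the wild ramification of index $m-2$ at $\infty_T$. The assumption $\gcd(r,\ell)\leq 2$ is tailored precisely to this analysis: it blocks the $2$-adic coincidences in the arithmetic of $m$ that would otherwise induce a nontrivial block decomposition of the fiber at the generic point of $\mathbb{A}^1_T$, shrinking $G$ into a proper imprimitive subgroup and rendering the Chebotarev count vacuous.
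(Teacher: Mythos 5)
Your reduction of the theorem to producing, for large $n$, a pair $(\alpha,\beta)$ such that $D_\alpha f(x)-\beta$ has $m-2$ distinct roots in $\mathbb{F}_{2^n}$ is the right framing, and your computation that $\deg D_\alpha f=m-2$ with leading coefficient $a_1\alpha$ is correct. But the core of your argument — that the Galois group of $D_\alpha f(x)-T$ is the full symmetric group $S_{m-2}$ — is impossible, not merely hard. In characteristic $2$ one has $D_\alpha f(x+\alpha)=f(x)+f(x+\alpha)=D_\alpha f(x)$, so the roots of $D_\alpha f(x)-T$ come in Galois-stable pairs $\{z,z+\alpha\}$ (with $\alpha$ in the base field), and these pairs form a nontrivial block system. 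Equivalently, the cover $x\mapsto D_\alpha f(x)$ factors through $x\mapsto x(x+\alpha)$, i.e. $D_\alpha f=(L_\alpha f)\circ\bigl(x(x+\alpha)\bigr)$. Hence the monodromy group is contained in the wreath product $(\mathbb{Z}/2\mathbb{Z})\wr S_{(m-2)/2}$, a proper subgroup of $S_{m-2}$ for all $m$ in range; no analysis of transpositions and primitivity can rescue the Jordan-criterion strategy, and the ensuing Chebotarev/Lang--Weil count with density $1/(m-2)!$ is based on a false premise. Your attribution of the hypothesis $\gcd(r,\ell)\leqslant 2$ to "ruling out imprimitive blocks" is likewise off target: imprimitivity is unavoidable and structural, independent of the degree's arithmetic.

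The actual proof embraces this factorization instead of fighting it. One studies the tower $\mathbb{F}_{2^n}(t)\subset F\subset\Omega$, where $F$ is the splitting field of $L_\alpha f(x)-t$ and $\Omega$ that of $D_\alpha f(x)-t$. For the lower step one shows that, for suitable $\alpha\in\mathbb{F}_{2^n}^{\ast}$, the polynomial $L_\alpha f$ is Morse: it has odd degree $d=(m-2)/2$ exactly because $m\equiv 0\pmod 4$ and $a_1\neq 0$; nondegenerate critical points fail for at most $(m-1)(m-4)$ values of $\alpha$; and distinct critical values fail for at most $(5d+4)\binom{(d-1)/2}{2}$ values of $\alpha$ — this last bound is where $m=2^r(2^\ell+1)$ and $\gcd(r,\ell)\leqslant 2$ are really used, via the nonvanishing of the top coefficient in $\alpha$ of $b_0^{de}\Pi_d(L_\alpha f)$. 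Geyer's characteristic-$2$ Hilbert--Serre theorem then gives geometric and arithmetic group $S_d$ for $F/\mathbb{F}_{2^n}(t)$. For the upper step $\Omega/F$, which is elementary abelian of type $(\mathbb{Z}/2\mathbb{Z})^{d-1}$, equality of arithmetic and geometric groups is \emph{not} automatic (your shortcut "geometric group full implies arithmetic equals geometric" is unavailable once the group is a proper subgroup of $S_{m-2}$); it requires the genuinely arithmetic condition that $x^2+\alpha x=b_1/b_0$ be solvable in $\mathbb{F}_{2^n}$, i.e. $\Tr\bigl(b_1/(b_0\alpha^2)\bigr)=0$, which is shown to hold for roughly half of the $\alpha$ by an elliptic-curve point count. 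Only after all of this does the explicit Chebotarev theorem (with density $1/(d!\,2^{d-1})$, not $1/(m-2)!$) produce the desired $\beta$. These missing ingredients — the forced imprimitivity, the Morse analysis of $L_\alpha f$, and the trace condition guaranteeing regularity of the quadratic tower — constitute the substance of the proof, so the proposal as written has a genuine gap.
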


 This gives  contributions to the exceptional APN conjecture for the two open cases mentioned above.
 Indeed it almost
solves 
the case of the degrees $m=4e$ where $e$ is a Gold number and it is a first step  for the degrees $m\equiv 0\pmod 8$.

Note that
point {\it{(i)}}
 of
Theorem 2  in \cite{Felipe}  reveals that
 the condition $a_1\not=0$ in  the previous 
theorem
is necessary to get  the maximality of $\delta(f)$.

\bigskip

It is  worth stressing that the methods used until now 
to prove that polynomials are not exceptional APN
have rested on algebraic geometric tools.
The point was to apply 
Weil type bounds
for the number of rational points 
on varieties defined over finite fields.
In contrast, our approach comes from algebraic number theory.
The point here is to apply the Chebotarev density theorem for functions fields introduced in this context by Voloch in \cite{Felipe}.




\subsection{Context and method of proof}\label{subsection:strategy}

Before entering into details in the next section
we provide here
the very broad lines of our approach
 which
 involves
  the Chebotarev density theorem, monodromy groups  and Morse polynomials.

In the whole paper we will consider
a polynomial 
$f$ of  ${\mathbb F}_{q}[x]$
where $q=2^n$ 
and we will denote by $\overline{\mathbb F}_2$ an algebraic closure of ${\mathbb F}_2$.
For any $\alpha\in{\mathbb F}_q^{\ast}$ 
the derivative of $f$ with respect to $\alpha$
will be denoted
by  $D_{\alpha}f(x):=f(x+\alpha)+f(x)$.

Let us recall the following explicit version of the Chebotarev density theorem for first degree primes given by Pollack in
\cite{Pollack}.

\begin{theorem} (\textbf{Chebotarev})\label{Chebotarev}
Suppose that $\Omega/{\mathbb F}_q(t)$ is a Galois extension having full field of constants ${\mathbb F}_{q^D}$.
Let $\mathcal C$ be a conjugacy class of $\Gal(\Omega/{\mathbb F}_q(t))$ every element of which restricts down to the $q$th power map on ${\mathbb F}_{q^D}$.
Let 
$V({\mathcal C})$ be the number of 
first degree primes $v$  of  ${\mathbb F}_q(t)$ unramified in $\Omega$ such that the Artin symbol $\Bigl(\frac{\Omega/{\mathbb F}_q(t)}{v}\Bigr)$ equals ${\mathcal C}$.
Then 
$$\left \lvert V({\mathcal C})-\frac{\sharp{\mathcal C}}{[\Omega:{\mathbb F}_{q^D}(t)]}q\right\rvert 
\leqslant 2 \frac{\sharp{\mathcal C}}{[\Omega:{\mathbb F}_{q^D}(t)]} \bigl(gq^{1/2} + g + [\Omega:{\mathbb F}_{q^D}(t)]\bigr)$$
where $g$ denotes the genus of $\Omega/{\mathbb F}_{q^D}$.
\end{theorem}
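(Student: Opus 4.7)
The plan is to follow the classical proof of the effective Chebotarev density theorem for function fields, combining character orthogonality on $G = \Gal(\Omega/\F_q(t))$ with Weil's bounds on Artin L-functions, and tracking all constants explicitly. Set $N = \Gal(\Omega/\F_{q^D}(t))$ and $d = |N| = [\Omega:\F_{q^D}(t)]$, so that $G/N \cong \Gal(\F_{q^D}/\F_q)$ is cyclic of order $D$. The hypothesis on $\mathcal{C}$ places the conjugacy class inside the Frobenius coset $\sigma N \subset G$, and the Artin symbol of every unramified first-degree prime $v$ of $\F_q(t)$ also lies in $\sigma N$.

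The starting identity is the character-orthogonality expansion
$$V(\mathcal{C}) = \frac{\sharp\mathcal{C}}{|G|}\sum_{\chi \in \widehat{G}}\overline{\chi(\mathcal{C})}\, S_\chi, \qquad S_\chi := \sum_{\substack{v\text{ unram.}\\ \deg v = 1}}\chi(\mathrm{Frob}_v).$$
Isolating the $D$ characters inflated from $G/N$: for each such $\chi$ the product $\overline{\chi(\mathcal{C})}\chi(\mathrm{Frob}_v)$ collapses to $1$ on the Frobenius coset, so these $D$ characters jointly contribute $\frac{\sharp\mathcal{C}}{|G|}\cdot D\cdot(q+O(\text{ram.})) = \frac{\sharp\mathcal{C}}{d}(q+O(\text{ram.}))$, the main term. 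For each remaining character $\chi$ (nontrivial on $N$) I would attach the Artin L-function $L(T,\chi)$ of $\Omega/\F_q(t)$: by Artin-Weil theory $L(T,\chi)$ is a polynomial in $T = q^{-s}$ of some degree $n_\chi$, and by Weil's theorem (the Riemann hypothesis for curves) each of its reciprocal roots has absolute value $q^{1/2}$. Taking the logarithmic derivative and extracting the $T$-coefficient yields $|S_\chi|\leqslant q^{1/2}\,n_\chi$. The conductor-discriminant formula combined with Riemann-Hurwitz for $\Omega/\F_{q^D}(t)$ bounds $\sum_{\chi|_N \neq 1}\chi(1) n_\chi$ in terms of the genus $g$ of $\Omega/\F_{q^D}$ and the geometric degree $d$; combined with $|\chi(\mathcal{C})|\leqslant\chi(1)$ and absorbing the ramified and infinite-prime corrections into the $g+d$ term, this produces the stated bound.

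The main technical obstacle is the bookkeeping that produces the sharp constant $2$ and the correct normalization $\sharp\mathcal{C}/d$ (rather than $\sharp\mathcal{C}/|G|$). This requires: (i) correctly identifying the $D$-dimensional subspace of $G/N$-inflated characters that assembles the main term, and separating it from the genuine L-function error; (ii) in positive characteristic with $p \mid |G|$, absorbing Swan conductor contributions from wild ramification into $n_\chi$ via the conductor-discriminant formula without losing the factor of $2$; and (iii) handling the inflated characters' L-functions (which are zeta-function-like with poles rather than polynomial) so that their polar parts build exactly the main term $\sharp\mathcal{C}\,q/d$ while their remaining finite contributions fold into the $g+d$ correction. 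Getting all three pieces to agree with the sharp constants is the technical heart of Pollack's refinement over the abstract Murty-Scherk version.
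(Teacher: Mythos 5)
The paper offers no proof of this statement: it is imported verbatim from Pollack \cite{Pollack}, who derives it from the function-field Chebotarev theorem of Fried and Jarden (Proposition 6.4.8 of \cite{FriedJarden}). So the comparison is between your outline and the cited source rather than an in-paper argument. Your route --- column orthogonality of characters of $G=\Gal(\Omega/{\mathbb F}_q(t))$, isolation of the $D$ characters inflated from $G/N$ with $N=\Gal(\Omega/{\mathbb F}_{q^D}(t))$, and Weil's Riemann hypothesis applied to the Artin $L$-functions of the remaining characters --- is the Murty--Scherk approach, and it is genuinely different from the one behind the quoted constants. Pollack and Fried--Jarden instead fix $\tau\in\mathcal C$ and count points of the curve corresponding to $\Omega$ fixed by the twisted Frobenius $\tau^{-1}\phi_q$; Weil's bound for that single curve produces the factor $2$ and the genus $g$ of $\Omega/{\mathbb F}_{q^D}$ directly, and the normalization $\sharp\mathcal C/[\Omega:{\mathbb F}_{q^D}(t)]$ falls out when one divides by the number of primes of $\Omega$ above a given $v$ with prescribed Frobenius. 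The $L$-function route is more flexible (it adapts readily to primes of higher degree); the geometric route delivers exactly the explicit inequality stated here with essentially no conductor bookkeeping.

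As a proof of the stated inequality, however, your proposal has a genuine gap: the entire quantitative content is deferred. The skeleton is sound --- the main term $\sharp\mathcal C\,q/d$ from the inflated characters and the bound $|S_\chi|\leqslant q^{1/2}n_\chi$ for the others are correct --- but the conversion of $\sum_\chi \chi(1)n_\chi$ into the genus $g$ of $\Omega$ via the conductor--discriminant formula and Riemann--Hurwitz (with Swan conductors, since wild ramification is unavoidable in characteristic $2$), the emergence of the constant $2$, and the absorption of the ramified corrections into $g+[\Omega:{\mathbb F}_{q^D}(t)]$ are precisely the theorem, and you list them as obstacles rather than carrying them out. Either complete that bookkeeping (Murty--Scherk show it can be done with comparable constants) or cite Fried--Jarden/Pollack as the paper does.
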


We will see that it
 ensures
that if the geometric and arithmetic monodromy groups of $D_{\alpha} f$
coincide
then for $n$ sufficiently large
there exists
 $\beta \in {\mathbb F}_{2^n}$
such that the number of solutions  of the equation $D_{\alpha}f(x)=\beta$
is equal to the degree of $D_{\alpha}f$.

But how can we compare the monodromy groups of $D_{\alpha} f$?
Denote by $\Omega$  
the splitting field 
of the
polynomial $D_{\alpha}f(x)-t$ over the field ${\mathbb F}_q(t)$ with $t$ transcendental over  ${\mathbb F}_q$.
The method developed in \cite{Felipe} consists in introducing
an intermediate  field $F$
between 
$\Omega$ and ${\mathbb F}_q(t)$, namely
 the splitting field   of the
polynomial $L_{\alpha}f(x)-t$ over the field ${\mathbb F}_q(t)$,
where
$L_{\alpha}f$  is
the unique polynomial 
such that
$L_{\alpha}f \left( x(x+\alpha)\right)=D_{\alpha}f(x)$
 (see Proposition 2.3 of  \cite{AHV} for the existence and the unicity of such a polynomial $L_{\alpha}f$).

The cornerstone of the results obtained in \cite{Felipe}
is that
for almost all $f$ 
of a given degree the associated polynomial
$L_{\alpha}f $ is Morse.
Also, it is proven in \cite{AHV} that
for some specific degrees $m$,
for any polynomial $f$ of degree $m$ 
there exists $\alpha$ such that
$L_{\alpha}f $ is  Morse.
Recall that a polynomial $g\in{\mathbb F}_{2^n}[x]$ is said to be Morse
(see the Appendix of Geyer to 
 \cite{JardenRazon})
if the critical points of $g$ are nondegenerate 
(i.e. the derivative $g'$ and the second Hasse-Schmidt derivative $g^{[2]}$ have no common roots),
if the critical values of $g$ are distincts 
(different zeros of $g'$  give different values of $g$) and
if the degree of $g$ is prime to the characteristic.

The reason we are interested in Morse polynomials  
comes from 
the more general form of the Hilbert theorem given by Serre in Theorem 4.4.5 of \cite{Serre}
and outlined in even characteristic in the Appendix of Geyer in \cite{JardenRazon}) which asserts that  the 
geometric 
monodromy group of a Morse polynomial 
is the full symmetric group.

It remains to identify when
the polynomial
$g:=L_{\alpha}f $ is  Morse.
First, 
the resultant between $g'$ and $g^{[2]}$
is a classical tool 
to recognize polynomials $g$ with
nondegenerate
critical points.
Thus,
the main difficulty rests in
the study of polynomials with distinct critical values.
To this end we make use
of the algebraic characterization of such polynomials
obtained by Geyer in the same appendix.
Last, the parity condition on the degree of $g$
explains the common hypothesis of the results of this paper:
when $f$ has even degree,
the polynomial $L_{\alpha}f$ will have odd degree
as soon as 
the degree of $f$ is divisible by $4$
and
its second leading coefficient is nonzero.

In order to explain how to treat
the extension $\Omega / F$,
let us  write $L_{\alpha} f = \sum_{k=0}^d b_{d-k} x^{k}$.
Proposition 4.6 in \cite{AHV}
states that if $L_{\alpha} f$ is Morse and if
$x^2+\alpha x = b_1/b_0$ has a solution in $\mathbb{F}_q$
then the extension $\Omega/F$ is regular.
As we know simple expressions of $b_1$ and $b_0$
in terms of the coefficients of $f$ and $\alpha$, 
 Hilbert's Theorem 90 enables
to translate the problem into a polynomial equation.

The following diagram sums up 
the different conditions we intend to verify
to prove that the extensions are regular.

\begin{figure}[h]\label{diagram}
\begin{tikzpicture}
[node distance=1.5cm]

 \node (Fqdet)          {${\mathbb F}_{2^n}(t)$};
 \node (Fqdetd) [right of=  Fqdet] {};
\node (Fqdetdd) [right of =Fqdetd] {};
 \node (vide) [above of=Fqdet] {};
\node (F)   [above of=vide]   {$F$};

\node (Fd)   [right of=F]   {}; 
\node (Fdd)   [right of=Fd]   {}; 

 \node (FqFdet) [right of=vide] {${\mathbb F}_{2^n}^{F}(t)$};
 
  \node (vide2) [above of=F] {};
\node (Omega)   [above of=vide2]   {$\Omega$};
\node (Omegad)   [right of=Omega]   {}; 
\node (Omegadd)   [right of=Omegad]   {}; 
 \node (FqOmegadeF) [right of=vide2] {$F{\mathbb F}_{2^n}^{\Omega}$};
\draw[decorate,decoration=brace] (Omegadd)--(Fdd)[right, midway,scale=0.9]
node[above=0.2cm,right=0.7cm,pos=0.1] 
{\scriptsize{
Regular as soon as: }}
node[above=0.2cm,right=0.7cm,pos=0.4] 
{\scriptsize{
the extension below is regular and when}}
node[above=0.2cm,right=0.7cm,pos=0.6] 
{\scriptsize{
\textbf{(II)} there exists $x \in \mathbb{F}_{{2^n}}$ s.t $x^2+\alpha x = \frac{b_1}{b_0}$}
};

\draw[decorate,decoration=brace] (Fdd)--(Fqdetdd)[right, midway,scale=0.9]
node[above=0.2cm,right=0.7cm,pos=0.1] 
{\scriptsize{
Regular as soon as $L_{\alpha}f$ is Morse i.e. when}}
node[above=0.2cm,right=0.7cm,pos=0.4]
{\scriptsize{
\textbf{(I.a)}  $L_{\alpha}f$ has nondegenerate critical points}}
node[above=0.2cm,right=0.7cm,pos=0.6] 
{\scriptsize{
\textbf{(I.b)}  $L_{\alpha}f$ has different critical values}}
node[above=0.2cm,right=0.7cm,pos=0.8] 
{
\scriptsize{
\textbf{(I.c)}   $L_{\alpha}f$ has odd degree
}
};

  \draw (Fqdet) to node[left, midway,scale=0.9]  {$G=\Gal\left(F/{\mathbb F}_{2^n}(t)\right)$} (F);
  \draw (Fqdet)--(FqFdet);
  \draw (FqFdet) to node[right, midway,scale=0.9]  {\ $\overline G$} (F);
  
  \draw (F) to node[left, midway,scale=0.9]  {$\Gamma=\Gal \left( \Omega / F \right) $} (Omega);
  \draw (F)--(FqOmegadeF);
  \draw (FqOmegadeF) to node[right, midway,scale=0.9]  {\ $\overline \Gamma$} (Omega);
  \end{tikzpicture}
\caption{}
  \label{diagram}
  \end{figure}
As explained above, condition 
(I.c) will involve 
a congruence condition on the degree $m$ of $f$,
and the non-vanishing of the second leading coefficient of $f$.
Conditions (I.a), (I.b) and (II) will translate into algebraic equations.
For the specific degrees mentioned in the introduction
we will manage
to bound the number of $\alpha$ for which 
one of the conditions fails.






 \section{Polynomials of  degree multiple of 4}\label{section:main}

We collect in this section
results concerning all
polynomials of degree $m$ congruent to 0 modulo 4
as opposed to more specific 
degrees $m=2^r(2^{\ell}+1)$ treated in Section \ref{section:specific_m}.


\subsection{A unique polynomial $L_{\alpha}f$ such that $(L_{\alpha}f)(x(x+\alpha))=D_{\alpha}f(x)$}

The following proposition is a particular case
of Proposition 2.1, Proposition 2.3
and Lemma 2.5
in \cite{AHV} whose
proofs rest on linear algebra.
\begin{proposition}\label{proposition:Lalpha}
Let $m$ be an integer such that $m \equiv 0 \pmod 4$
and $f=\sum_{k=0}^m a_{m-k} x^{k} \in \mathbb{F}_q[x]$
a polynomial of degree $m$.
Consider $\alpha \in \mathbb{F}_q^*$.
There exists a unique polynomial
 $L_{\alpha}f :=\sum_{k=0}^{d}b_{d-k}x^{k}$ in $\mathbb{F}_q[x]$
of degree less or equal to $d:=(m-2)/2$
such that
  $$(L_{\alpha}f) (x(x+\alpha))=D_{\alpha}f(x).$$
Moreover
\begin{enumerate}[label=(\roman*)]
\item the application $L_{\alpha}$ is linear,
\item  $L_{\alpha}
f$ has degree $d$ if and only if $a_1 \neq 0$,
\item 
$ \left\{
\begin{array}{rcrrr}
b_0 & = & a_1 \alpha &                                             & \\                   
b_1 & = & a_2 \alpha^2 + a_3 \alpha                                & \textrm{if } m \equiv 0 \pmod 8  & 
 \textrm{or} \\ 
b_1 & = & a_0 \alpha^4 + a_1 \alpha^3  + a_2 \alpha^2 + a_3 \alpha & \textrm{if } m \equiv 4 \pmod 8  &
\end{array}
\right .
$
\item when seen as an element of $\mathbb{F}_2[a_0,\ldots,a_m,\alpha]$
the coefficient $b_i$ is an homogeneous polynomial
of degree $2i+2$ when considering the weight $w$
such that $w(a_j)=j$ and $w(\alpha)=1$.
\end{enumerate}

\end{proposition}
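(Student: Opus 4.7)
My plan is to exploit the fact that, in characteristic $2$, the operator $D_\alpha$ is nilpotent:
\[
D_\alpha(D_\alpha f)(x)=f(x+2\alpha)+2f(x+\alpha)+f(x)=0
\]
because $2\alpha=0$. Hence $D_\alpha f$ is invariant under the order-$2$ translation $\sigma:x\mapsto x+\alpha$. The $\sigma$-invariants of $\mathbb{F}_q[x]$ are exactly $\mathbb{F}_q[y]$ for $y=x(x+\alpha)$: the relation $x^2=\alpha x+y$ shows that $\{1,x\}$ generates $\mathbb{F}_q[x]$ as an $\mathbb{F}_q[y]$-module, and writing any element as $P_0(y)+xP_1(y)$, the $\sigma$-invariance translates into $\alpha P_1(y)=0$, forcing $P_1=0$. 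Since $y$ is transcendental over $\mathbb{F}_q$, this yields a unique polynomial $L_\alpha f\in\mathbb{F}_q[T]$ with $(L_\alpha f)(y)=D_\alpha f(x)$, and linearity \textit{(i)} is immediate from uniqueness. The bound $\deg D_\alpha f\leqslant m-2$ (the $x^{m-1}$ coefficient $m\,a_0\alpha$ vanishes because $m$ is even) combined with $\deg_x y^e=2e$ gives $\deg L_\alpha f\leqslant d$.

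\textbf{Explicit coefficients.} For \textit{(ii)} and \textit{(iii)} I would equate coefficients of $x^{m-2},x^{m-3},x^{m-4}$ on both sides of $(L_\alpha f)(y)=D_\alpha f(x)$. Expanding $y^k=\sum_{j=0}^{k}\binom{k}{j}\alpha^{k-j}x^{k+j}$, the monomial $b_0 y^d$ contributes $b_0$, $b_0 d\alpha$ and $b_0\binom{d}{2}\alpha^2$ to $x^{m-2},x^{m-3},x^{m-4}$ respectively, while $b_1 y^{d-1}$ contributes only $b_1$ to $x^{m-4}$. On the other side, the $x^i$-coefficient of $D_\alpha f$ is $\sum_{k>i}a_{m-k}\binom{k}{i}\alpha^{k-i}$, and all required binomial coefficients can be evaluated modulo $2$ via Lucas' theorem. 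The hypothesis $m\equiv 0\pmod 4$ kills $\binom{m}{2}$ and $\binom{m}{3}$ (bits $0,1$ of $m$ are zero), so the $x^{m-2}$-matching yields $b_0=a_1\alpha$ and establishes \textit{(ii)}. At $x^{m-4}$ one encounters $\binom{m}{4}$ and $\binom{d}{2}$: the first is odd iff bit $2$ of $m$ is set, i.e.\ iff $m\equiv 4\pmod 8$, while $\binom{d}{2}=\binom{(m-2)/2}{2}$ is odd iff $m\equiv 0\pmod 8$. These two regimes are complementary and produce precisely the two formulas of \textit{(iii)}.

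\textbf{Weight homogeneity and main obstacle.} For \textit{(iv)} I grade $\mathbb{F}_2[a_0,\ldots,a_m,\alpha,x]$ by $w(x)=w(\alpha)=1$ and $w(a_j)=j$. Each monomial $a_{m-k}x^k$ of $f$ has weight $m$, and the substitution $x\mapsto x+\alpha$ preserves this grading, so $D_\alpha f$ is isobaric of weight $m$. Since $w(y)=2$, the uniqueness of the expansion of $D_\alpha f$ in the $\mathbb{F}_q[y]$-module $\mathbb{F}_q[x]$ forces each $b_{d-k}$ to be homogeneous of weight $m-2k$, i.e.\ $w(b_i)=2i+2$. The main obstacle in this proof is the elementary but delicate Lucas-theorem bookkeeping in \textit{(iii)}: in particular, the $a_1\alpha^3$ contribution coming from $b_0 y^d=a_1\alpha\cdot y^d$ must cancel the $a_1\binom{m-1}{3}\alpha^3$ term in $D_\alpha f$ precisely in the case $m\equiv 0\pmod 8$, and it is this cancellation that is responsible for the two-case structure of the formula for $b_1$; once this bookkeeping is verified, the rest of the statements are essentially formal consequences.
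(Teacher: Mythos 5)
Your argument is correct, and it reaches the statement by a genuinely different route than the paper. The paper simply invokes the corresponding results of \cite{AHV}, where existence, uniqueness and the formulas for $b_0,b_1$ come from writing the identity $(L_{\alpha}f)(x(x+\alpha))=D_{\alpha}f(x)$ as a unit triangular linear system in the unknowns $b_i$ and solving it row by row. You instead obtain existence and uniqueness conceptually: $D_{\alpha}f$ is fixed by the involution $\sigma\colon x\mapsto x+\alpha$, and the $\sigma$-invariants of $\mathbb{F}_q[x]$ (or of $R[x]$ with $R=\mathbb{F}_2[a_0,\ldots,a_m,\alpha]$, where $\alpha$ is a nonzerodivisor, which is the setting needed for \textit{(iv)}) are exactly the polynomials in $y=x(x+\alpha)$, via the free module decomposition $P_0(y)+xP_1(y)$ coming from $x^2=\alpha x+y$. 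This buys you linearity and the polynomiality of the $b_i$ in $a_0,\ldots,a_m,\alpha$ for free, and it makes the isobaric property \textit{(iv)} transparent, since $D_{\alpha}f$ is isobaric of weight $m$ and $w(y)=2$, so uniqueness of the expansion in powers of $y$ forces $w(b_i)=2i+2$. For \textit{(ii)} and \textit{(iii)} your coefficient matching at $x^{m-2}$ and $x^{m-4}$ is the same computation one would do in the first rows of the triangular system, and your Lucas-theorem bookkeeping checks out: $\binom{m}{2}\equiv\binom{m}{3}\equiv 0$, $\binom{m-1}{3}\equiv\binom{m-2}{2}\equiv 1$ for $m\equiv 0\pmod 4$, while $\binom{m}{4}$ is odd exactly when $m\equiv 4\pmod 8$ and $\binom{d}{2}$ is odd exactly when $m\equiv 0\pmod 8$, so the $a_1\alpha^3$ term cancels precisely in the latter case, yielding the two formulas for $b_1$ (the $x^{m-3}$ comparison, as you can check, is automatically consistent and gives no new information). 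What the paper's (i.e.\ \cite{AHV}'s) triangular-system approach buys in exchange is a uniform algorithmic description of every coefficient $b_i$, not just the top two, which is convenient elsewhere in that paper; for the present proposition the two approaches are equally adequate.
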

\begin{remark}
In order to fullfil condition (I.c)
we want the polynomial $L_{\alpha}f$ 
to have odd degree.
But $d=(m-2)/2$ is odd
when $m \equiv 0 \pmod 4$,
so a sufficient condition is to take a nonzero $b_0$ 
i.e. $a_1 \neq 0$.
\end{remark}

\begin{remark}
The point of view 
where $L_{\alpha}f$ is thought as an element
of $\mathbb{F}_2 [a_0,\ldots, a_m,\alpha]$
will often be adopted in the following.
The proof of the existence of a unique $L_{\alpha}f \in \mathbb{F}_2 [a_0,\ldots, a_m,\alpha]$
such that $(L_{\alpha}f)(x(x+\alpha))=D_{\alpha}f(x)$
rests on the same arguments:
the equation reduces to a unit triangular system.
\end{remark}

\subsection{The trace condition}

Recall that condition (II)
involves the existence of a solution of the equation $x^2+ \alpha x= b_1/b_0$
in $\mathbb{F}_{2^n}$.
By Hilbert's Theorem 90
this is equivalent to say that 
$\Tr \left(\frac{b_1}{b_0\alpha^2}\right)=0$
where $\Tr$ 
stands for the trace from $\mathbb{F}_{2^n}$ 
to $\mathbb{F}_{2}$.

\begin{proposition}\label{proposition:trace}
Let $f=\sum_{k=0}^{m}a_{m-k}x^{k}\in \F_{2^{n}}[x]$
 be a polynomial of degree $m$ such that $a_1\neq 0$.
 
 \begin{enumerate}[label=(\roman*)]
\item If $m \equiv 0 \pmod 8$ 
then the  number of $\alpha \in \F_{2^n}^{*}$ such that $\Tr\left(\frac{b_1}{b_0 \alpha^2}\right)=0$ is $2^{n-1}-1$ if $a_{2}^{2} +a_1 a_3 \neq 0$ and $2^{n}-1$ otherwise.
\item If $m \equiv 4 \pmod 8$ 
then the number of $\alpha\in \F_{2^n}^*$ such that $\Tr\left(\frac{b_1}{b_0 \alpha^2}\right)=0$ is
 equal to $2^{n}-1$ if $a_2^2+a_1a_3=0$ 
 and greater or equal to
$
\frac{1}{2}
(2^n-2^{n/2+1}-1)$ otherwise.
\end{enumerate}
\end{proposition}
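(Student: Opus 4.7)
The plan is to substitute the explicit formulas for $b_0$ and $b_1$ from Proposition~\ref{proposition:Lalpha} into $\Tr(b_1/(b_0\alpha^2))$, simplify via the Frobenius-invariance of the absolute trace, and then count admissible $\alpha$ by elementary means in case (i) and via the Hasse--Weil bound on an Artin--Schreier curve in case (ii).

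With $b_0 = a_1\alpha$ one gets
$$\frac{b_1}{b_0\alpha^2} \;=\; \frac{a_2}{a_1\alpha}+\frac{a_3}{a_1\alpha^2}\quad(m\equiv 0\bmod 8), \qquad \frac{b_1}{b_0\alpha^2} \;=\; \frac{a_0\alpha}{a_1}+1+\frac{a_2}{a_1\alpha}+\frac{a_3}{a_1\alpha^2}\quad(m\equiv 4\bmod 8).$$
Since $\Tr(y^2)=\Tr(y)$ and every element of $\F_{2^n}$ admits a unique square root, one has $\Tr(a_3/(a_1\alpha^2))=\Tr(\sqrt{a_3/a_1}/\alpha)$, which merges with the $a_2$ term to give $\Tr(\gamma/\alpha)$, where $\gamma:=a_2/a_1+\sqrt{a_3/a_1}$. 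One checks $\gamma^2=(a_2^2+a_1a_3)/a_1^2$, so $\gamma=0$ exactly when $a_2^2+a_1a_3=0$.

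In case (i) the trace simplifies to $\Tr(\gamma/\alpha)$: for $\gamma=0$ every $\alpha\in\F_{2^n}^{*}$ is a solution, yielding $2^n-1$; for $\gamma\neq 0$, $\alpha\mapsto\gamma/\alpha$ is a bijection of $\F_{2^n}^{*}$ and the $\F_2$-linear trace has kernel of index two, so exactly $2^{n-1}-1$ nonzero $\alpha$ work. In case (ii), writing $A:=a_0/a_1\neq 0$, the equation becomes $\Tr(A\alpha)+\Tr(1)+\Tr(\gamma/\alpha)=0$; when $\gamma=0$ this reduces to an affine $\F_2$-linear condition on $\alpha$ whose solution set is counted directly. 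When $\gamma\neq 0$, the Artin--Schreier description $\{z\in\F_{2^n}:\Tr(z)=0\}=\wp(\F_{2^n})$ recasts the condition as the existence of $y\in\F_{2^n}$ with $y^2+y=A\alpha+\gamma/\alpha+\Tr(1)$; clearing denominators gives an affine curve whose smooth projective model $C$ is an Artin--Schreier double cover of $\mathbb{P}^1_\alpha$ ramified precisely at the two simple poles $\alpha=0,\infty$ of $A\alpha+\gamma/\alpha$. Riemann--Hurwitz yields $g(C)=1$, and the Hasse--Weil bound gives $|\#C(\F_{2^n})-(2^n+1)|\leq 2\cdot 2^{n/2}$. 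Subtracting the two ramified points at infinity and dividing by the generic fibre size $2$ recovers the claimed bound $\tfrac{1}{2}(2^n-2^{n/2+1}-1)$.

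The main obstacle is the geometric step in case (ii) with $\gamma\neq 0$: one must correctly identify the Artin--Schreier cover, track its wild ramification at $\alpha=0$ and $\alpha=\infty$, compute the genus exactly, and then bookkeep the boundary contributions to recover the stated inequality on the nose. Everything else is routine Frobenius-invariant trace manipulation and $\F_2$-linear counting.
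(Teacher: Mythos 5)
Your proposal is essentially the paper's proof. Case (i) is identical (use $\Tr(y)=\Tr(y^2)$ to merge the $a_2$ and $a_3$ terms into a single $\Tr(\gamma/\alpha)$ with $\gamma^2=(a_2^2+a_1a_3)/a_1^2$, then count via a bijection of $\F_{2^n}^{*}$), and in case (ii) with $a_2^2+a_1a_3\neq 0$ you land on the same genus-one curve count: the paper substitutes $v=a_0\alpha/(a_1K)$, $y=vw$ to reach the cubic $K(v^3+v)+Sv^2=y^2+vy$ and applies Hasse--Weil, while you keep the Artin--Schreier model $y^2+y=A\alpha+\gamma/\alpha+\Tr(1)$ over $\mathbb{P}^1_{\alpha}$ and get $g=1$ from the conductor at the two simple poles; both give $\#C(\F_{2^n})\geqslant 2^n+1-2^{n/2+1}$, and your bookkeeping (remove the two totally ramified rational points, divide fibres by $2$) reproduces the bound $\tfrac12(2^n-2^{n/2+1}-1)$ exactly as the paper does. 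One caveat: in case (ii) with $a_2^2+a_1a_3=0$ you say the affine condition $\Tr(a_0\alpha/a_1)=\Tr(1)$ is ``counted directly'' but do not give the count; it is $2^{n-1}-1$ ($n$ even) or $2^{n-1}$ ($n$ odd), not the $2^n-1$ asserted in the statement you were given. You could not have proved that figure because it is a slip in the statement itself (the paper's own proof also obtains $2^{n-1}-1$ there); it is harmless for the application, since only the lower bound $\tfrac12(2^n-2^{n/2+1}-1)$ is used later, but you should state the correct count rather than leave it implicit.
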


\begin{proof}
The situation is simpler when
$m \equiv 0 \pmod 8$.
We have by Proposition \ref{proposition:Lalpha} that  
$ \frac{b_1}{b_0}=\frac{a_2 \alpha + a_3}{a_1}$
so
$\Tr\left(\frac{b_1}{b_0 \alpha^2}\right)=\Tr\left(\frac{a_2^2 +a_1a_3}{a_1^2 \alpha^2}\right)$.
We notice that if $a^2_2  +a_1a_3 \neq 0$
then the map $\alpha \mapsto \frac{a_{2}^{2} +a_1 a_3}{a_1 ^2 \alpha^2}$ is a permutation of $\F_{2^{n}}^{*}$.

In the case where $m \equiv 4 \pmod 8$,
we find that
$\Tr \left(\frac{b_1}{b_0\alpha^2}\right)=0$ if and only 
if $\Tr \left(\frac{a_2^2+a_1a_3}{a_1^2\alpha^2}+\frac{a_0\alpha}{a_1}\right)=n$.
If $a_2^2+a_3a_1=0$ then there exist $2^{n-1}-1$ elements $\alpha$ such
that $\Tr\left(\frac{a_0}{a_1}\alpha\right)=n$.
Otherwise, let us set $C=a_0/a_1$ and $D^2=\frac{a_2^2+a_1a_3}{a_1^2}$, 
so we are reduced to study the equation
$\Tr (C\alpha)+\Tr(D/\alpha)=n$.
Then if we set $K^2=CD$ and $v=a_0\alpha/a_1K$, we obtain
$\Tr (Kv)+\Tr (K/v)=n$. 
Choosing $S$ with $\Tr (S)=n$ we can rewrite the last condition as the existence of
$w$ such that $Kv + K/v = S + w^2 +w$ and multiplying through by $v^2$ and setting $y = vw$ turns the equation
into $K(v^3+v) +Sv^2 = y^2 + vy$, which defines an  elliptic curve $E$ (as $K \ne 0$) whose projective closure is smooth with one point
at infinity.
Let us set $q=2^n$.
By the Hasse-Weil bound, 
the number of rational points over ${\mathbb F}_q$ 
on $E$ is at least $q -2\sqrt{q}$.
Moreover, for any $v$ in ${\mathbb F}^{\ast}_{q}$ there are 
at most 2 elements
 $(v,w)$ on $E$. 
Therefore, there are at 
least $\frac{1}{2}(q-2\sqrt{q}-1)$ 
suitable nonzero $v$ and thus as many $\alpha$ which enables us to conclude the proof.
\end{proof}


\subsection{Nondegenerate critical points}
In this subsection we want to bound
for a given polynomial $f$ 
the number of $\alpha$ such that 
$L_{\alpha}f$ has degenerate
critical points,
that is those which do not satisfy condition (I.a).

\begin{proposition}\label{proposition:ncdp}
Let $m$ be an integer such that $m \equiv 0 \pmod 4 $ and
let $f=\sum_{k=0}^{m}a_{m-k}x^{k}\in \F_{2^{n}}[x]$
 be a polynomial of degree $m$ 
such that $a_1\neq0$. 
The critical points of $L_{\alpha}f$ are nondegenerate except 
for at most $(m-1)(m-4)$ values of $\alpha \in \overline{\F}_{2}$.
\end{proposition}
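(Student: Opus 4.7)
My plan is to reduce the proposition to a bound on the number of roots of an auxiliary polynomial in $\alpha$, defined as a resultant, and to control its degree by translating the problem back to $f$ via the chain rule. Set $g := L_\alpha f$, of degree $d = (m-2)/2$ in $x$. By the very definition of nondegenerate critical points, $g'(x)$ and $g^{[2]}(x)$ share a root in $\overline{\F}_2$ if and only if
\[
R(\alpha) := \operatorname{Res}_x\bigl(g'(x),\, g^{[2]}(x)\bigr) \;\in\; \F_{2^n}[\alpha]
\]
vanishes. So the proposition reduces to two assertions: $R$ is not the zero polynomial, and $\deg_\alpha R \leq (m-1)(m-4)$.

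For the degree bound, I would translate via the chain rule. Differentiating the identity $(L_\alpha f)(x(x+\alpha)) = D_\alpha f(x)$ once in $x$ yields $\alpha\, g'(x(x+\alpha)) = D_\alpha f'(x)$. For the second derivative, comparing the coefficient of $T^2$ in the expansion
\[
L_\alpha f\bigl((x+T)(x+T+\alpha)\bigr) \;=\; D_\alpha f(x+T) \;=\; \sum_n T^n D_\alpha f^{[n]}(x)
\]
(using $(x+T)(x+T+\alpha) = x(x+\alpha) + \alpha T + T^2$ in characteristic two) gives $g'(y) + \alpha^2 g^{[2]}(y) = D_\alpha f^{[2]}(x)$, where $y = x(x+\alpha)$; combining with the first identity yields $\alpha^3 g^{[2]}(y) = \alpha\, D_\alpha f^{[2]}(x) + D_\alpha f'(x)$. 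Hence, for $\alpha \neq 0$, the polynomial $g$ has a degenerate critical point if and only if $D_\alpha f'(x)$ and $D_\alpha f^{[2]}(x)$ share a common root in $x$. Now view these as bivariate polynomials in $\F_{2^n}[x,\alpha]$: one has $\deg_{(x,\alpha)} D_\alpha f' \leq m-1$, inherited from $\deg f' \leq m-1$. Since $D_0 f^{[2]} = 0$, one can write $D_\alpha f^{[2]} = \alpha \tilde Q(x,\alpha)$; tracking that $\binom{m}{2} \equiv 0 \pmod 2$ for $m \equiv 0 \pmod 4$ forces the leading coefficient of $f^{[2]}$ to vanish, giving $\deg_x f^{[2]} \leq m-3$, and one obtains $\deg_{(x,\alpha)} \tilde Q \leq m-4$. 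Bezout in the affine $(x,\alpha)$-plane then bounds the number of common zeros of $D_\alpha f'$ and $\tilde Q$, and hence the number of bad $\alpha$, by $(m-1)(m-4)$.

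The hypothesis $a_1 \neq 0$ enters to secure both that $R$ is not identically zero and that $D_\alpha f'$ and $\tilde Q$ have no common factor in $\F_{2^n}[x,\alpha]$ (which is what allows Bezout to deliver a finite intersection count). By Proposition~\ref{proposition:Lalpha}(iii), $b_0 = a_1 \alpha$, so $g'$ has genuine $x$-degree $d-1$ with nonzero leading coefficient; a direct inspection of the leading $x$-terms then shows that $D_\alpha f'$ and $\tilde Q$ cannot share a common irreducible factor. The main technical obstacle is the degree bookkeeping in characteristic two required to obtain the tight bound $\deg \tilde Q \leq m-4$: binomial-coefficient parities for $m \equiv 0 \pmod 4$ cause several leading terms to cancel, and these cancellations must be carefully tracked. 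Once they are secured, the remainder of the argument is a routine application of Bezout and the resultant criterion.
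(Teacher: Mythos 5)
Your reduction is essentially the one the paper uses: the identities $\alpha\,(L_{\alpha}f)'(x(x+\alpha))=(D_{\alpha}f)'(x)$ and $\alpha^{3}(L_{\alpha}f)^{[2]}(x(x+\alpha))=\alpha\,(D_{\alpha}f)^{[2]}(x)+(D_{\alpha}f)'(x)$ do show that, for $\alpha\neq 0$, $L_{\alpha}f$ has a degenerate critical point if and only if $(D_{\alpha}f)'$ and $(D_{\alpha}f)^{[2]}$ have a common root (this is exactly Lemma 3.3 of \cite{AHV}, which the paper cites rather than rederives), and your degree bookkeeping ($\deg_x f^{[2]}\leqslant m-3$ because $\binom{m}{2}$ is even, $D_{\alpha}f^{[2]}=\alpha\tilde Q$ with $\deg\tilde Q\leqslant m-4$) is correct.

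The genuine gap is the step you dismiss in one sentence: that $(D_{\alpha}f)'$ and $\tilde Q$ have no common irreducible factor in $\overline{\F}_{2}[x,\alpha]$, equivalently that $R(\alpha)=\Res_x\bigl((D_{\alpha}f)',(D_{\alpha}f)^{[2]}\bigr)$ is not the zero polynomial. Inspecting leading $x$-terms only tells you that both polynomials have exact $x$-degree $m-4$ with nonzero leading coefficients (respectively $a_1\alpha^{2}$ and $a_1$, up to constants); two polynomials with nonzero leading coefficients can perfectly well share a factor, so nothing is proved. Without excluding a common component (at least one dominating the $\alpha$-line) your Bezout count does not apply, and you cannot even rule out infinitely many bad $\alpha$. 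This nonvanishing is precisely the nontrivial content of the proposition, and it is what the paper's proof delegates: it writes $h=f-a_0x^{m}$, uses $m\equiv 0\pmod 4$ to get $(D_{\alpha}f)'=(D_{\alpha}h)'$ and $(D_{\alpha}f)^{[2]}=(D_{\alpha}h)^{[2]}$ with $\deg h=m-1\equiv 3\pmod 4$ (here $a_1\neq 0$ is used), and then invokes Lemma 3.4 of \cite{AHV}, which by an explicit computation (weighted homogeneity and identification of the coefficient of the top power of $\alpha$, a nonzero multiple of a power of the leading coefficient) shows the resultant is a polynomial of degree exactly $(m-1)(m-4)$ in $\alpha$. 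Your proposal needs an argument of that kind to be complete; note also that you place the ``main technical obstacle'' in the bound $\deg\tilde Q\leqslant m-4$, which is in fact the easy part, while the hard part --- the non-vanishing of $R$ --- is the one left unproved.
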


\begin{proof}
Recall that
the second Hasse-Schmidt derivative $(D_{\alpha}f)^{[2]}$ of the polynomial $D_{\alpha}f$
is defined by 
$$D_{\alpha}f(t+u)\equiv D_{\alpha}f(t) + (D_{\alpha}f)'(t)u + (D_{\alpha}f)^{[2]}(t)u^2 \pmod{u^3}$$
and that
 Lemma 3.3 of \cite{AHV}
states that the critical points of
$L_{\alpha}f$ are nondegenerate 
if and only if 
$(D_{\alpha}f)'$ and $(D_{\alpha}f)^{[2]}$ have no common roots
in $\overline{\mathbb{F}}_2$.

We note $h=f-a_0x^m$.
The congruence of $m$ implies that 
 $(x^m)'$ and $(x^m)^{[2]}=\binom{m}{2} x^{m-2}$ both vanish.
Thus we use
the linearity of $D_{\alpha}$ and of the derivative operators
to get 
$(D_{\alpha}f)' = (D_{\alpha}h)'$ and
$(D_{\alpha}f)^{[2]} = (D_{\alpha}h)^{[2]}$
and so the equality between the two resultants 
$\Res((D_{\alpha}f)', (D_{\alpha}f)^{[2]}) = 
\Res((D_{\alpha}h)', (D_{\alpha}h)^{[2]})$.
As $a_1 \neq 0$,
we are reduced to the case where the degree of $h$ is congruent to $3$ modulo $4$.
This case is treated in Lemma 3.4 in \cite{AHV} which states that
$\Res((D_{\alpha}h)', (D_{\alpha}h)^{[2]})$ is a polynomial of degree
$(m-1)(m-4)$ in $\alpha$ with at most $(m-1)(m-4)$ roots in $\overline{\mathbb{F}}_2$.
\end{proof}


\subsection{Distinct critical values.}\label{subsection:dcv}
In this section we will bound the number of $\alpha\in \F_{2^n}^{\ast}$ 
such that the condition (I.b) is not satisfied.

The aim of the two following statements
is to study the case of $L_{\alpha} (x^{m-1})$ when $m\equiv 0 \pmod 4$.

\begin{lemma}\label{lemma:link_between_roots}
We consider an integer 
$m \geqslant 8$ 
 such that 
  $m \equiv 0 \pmod 4$ and  $d=(m-2)/2$.
 For all $f=\sum_{k=0}^{m}a_{m-k}x^{k} \in \F_{2^n}[x]$ of degree $m$ such that $a_1\neq0$
 the following conditions are equivalent:
\begin{enumerate}[label=(\roman*)]
\item $(L_{\alpha}f)'$ has $\frac{d-1}{2}$  distinct (double) roots  in $ \overline{{ \mathbb{F}}}_2$.  
\item $(D_{\alpha}f)'$ has $d-1$ distinct (double) roots  in $ \overline{{ \mathbb{F}}}_2$.  
\end{enumerate}
\end{lemma}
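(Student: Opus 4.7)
The plan is to exploit the characteristic-two phenomenon that the derivative of any polynomial has only even-degree monomials, so that $(L_{\alpha}f)'$ is automatically a perfect square. Together with the chain rule applied to $(L_{\alpha}f)(x(x+\alpha)) = D_{\alpha}f(x)$, using $(x^2+\alpha x)' = \alpha$ in characteristic two, I obtain the identity
$$(D_{\alpha}f)'(x) = \alpha \cdot (L_{\alpha}f)'(x^2 + \alpha x).$$
Since $a_1 \neq 0$, Proposition \ref{proposition:Lalpha} gives $\deg(L_{\alpha}f) = d$; and because $d = (m-2)/2$ is odd when $m \equiv 0 \pmod 4$, the leading coefficient of $(L_{\alpha}f)'$ equals $d\, b_0 = b_0 = a_1 \alpha \neq 0$, so $(L_{\alpha}f)'$ has degree exactly $d-1$ (an even integer).

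Next, I would extract the square root. Since $(L_{\alpha}f)'$ only contains even-degree monomials and $\overline{\mathbb F}_2$ is perfect, there exists a unique $P \in \overline{\mathbb F}_2[x]$ of degree $(d-1)/2$ such that
$$(L_{\alpha}f)'(x) = P(x)^2.$$
Substituting into the identity above yields
$$(D_{\alpha}f)'(x) = \alpha\, P(x^2+\alpha x)^2 = \alpha\, Q(x)^2, \qquad Q(x) := P(x^2 + \alpha x),$$
a polynomial of degree $2 \cdot (d-1)/2 = d-1$.

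With these factorisations in hand, the statement reduces to two pointwise equivalences: $(L_{\alpha}f)'$ has $(d-1)/2$ distinct double roots if and only if $P$ is separable, and $(D_{\alpha}f)'$ has $d-1$ distinct double roots if and only if $Q$ is separable. It only remains to show that $P$ is separable if and only if $Q$ is separable. For every root $\beta$ of $P$, the polynomial $x^2+\alpha x + \beta$ has two distinct roots in $\overline{\mathbb F}_2$ because its formal derivative is $\alpha \neq 0$; and distinct roots $\beta_i \neq \beta_j$ of $P$ give disjoint fibres under the map $x \mapsto x^2+\alpha x$. Hence if $P$ has the $s$ distinct roots $\beta_1, \dots, \beta_s$ with multiplicities $m_1, \dots, m_s$, then
$$Q(x) = \prod_{i=1}^{s} (x^2 + \alpha x + \beta_i)^{m_i}$$
has exactly $2s$ distinct roots, each of multiplicity $m_i$. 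Thus $Q$ attains its maximal number of distinct roots $d-1$ precisely when every $m_i = 1$, i.e.\ when $P$ is separable.

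I do not anticipate a serious obstacle: the heart of the argument is the observation that in characteristic two $(L_{\alpha}f)'$ is a square, after which the equivalence (i) $\Leftrightarrow$ (ii) becomes the elementary fact that composing a polynomial with the separable, everywhere $2$-to-$1$ map $x \mapsto x^2+\alpha x$ preserves separability.
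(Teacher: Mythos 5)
Your proof is correct and rests on the same two ingredients as the paper's: the chain-rule identity $(D_{\alpha}f)'=\alpha\,(L_{\alpha}f)'\circ T_{\alpha}$ with $T_{\alpha}(x)=x(x+\alpha)$, and the fact that $T_{\alpha}$ is a separable $2$-to-$1$ map whose fibres are the pairs $\{z,z+\alpha\}$, so roots of the two derivatives correspond two-to-one. Extracting the square roots $P$ and $Q$ is only a repackaging of the paper's argument, though it has the small merit of making the multiplicity (``double roots'') bookkeeping completely explicit.
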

\begin{proof}
Let us first assume that 
$\tau_{1},\tau_{2},\ldots,\tau_{(d-1)/2}$ 
are $\frac{d-1}{2}$ distinct roots of $(L_{\alpha}f)'$. 
We have that
\begin{equation}\label{equation:derivee_de_Dalpha}
(D_{\alpha}f)' = \left( L_{\alpha} f \circ T_{\alpha}  \right)'= \alpha (L_{\alpha}f)'\circ T_{\alpha}
\end{equation}
where $T_{\alpha}(x):=x(x+\alpha)$.
So if we choose $z_i\in \overline{\F}_{2}$
such that $T_{\alpha}(z_i)=T_{\alpha}(z_i+\alpha)=\tau_{i}$
the elements $z_1,z_1+\alpha,z_2,z_2+\alpha,\ldots,z_{\frac{d-1}{2}},z_{\frac{d-1}{2}}+\alpha$
 are $d-1$ distinct roots of $(D_{\alpha}f)'$.

Conversely, $z$ is a root of $(D_{\alpha}f)'= \alpha (L_{\alpha}f)'\circ T_{\alpha}$ if and only if $z+\alpha$ is.
So $d-1$ distinct roots of $(D_{\alpha}f)'$
 can always be written as $z_1,z_1+\alpha,z_2,z_2+\alpha,\ldots,z_{\frac{d-1}{2}},z_{\frac{d-1}{2}}+\alpha$.
 Now set $\tau_i:=T_{\alpha}(z_i)=T_{\alpha}(z_i+\alpha)$ to get
 $\frac{d-1}{2}$ distinct roots of $(L_{\alpha}f)'$.
\end{proof}

\begin{lemma}\label{lemma:case_of_monomial}
We consider an integer  
$m \geqslant 8$ 
such that 
$m \equiv 0 \pmod 4$
and the monomial $f(x)=x^{m-1}$.
For any $\alpha\in \F_{2^n}^{\ast}$
the polynomial
$(L_{\alpha}f)'$ has $(d-1)/2$ distinct double roots in $\overline{\F}_{2}$,
namely the $\tau_1,\ldots,\tau_{(d-1)/2}$ defined by
$$\tau_{i}=\frac{\alpha^2}{1+\theta_{i}}+\frac{\alpha^2}{1+\theta_{i}^2}$$ 
where $\theta_{1},\ldots,\theta_{(d-1)/2}$ are $(d-1)/2$
different
$d$-th roots of the unity in 
$\overline{\F}_{2}\setminus\{ 1\}$
such that 
$\theta_{i} \theta_{j}\neq1$ for $i\neq j.$
\end{lemma}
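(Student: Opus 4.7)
The plan is to compute $(D_\alpha f)'$ explicitly, exploit the square structure that appears in characteristic $2$, and then translate the description of its roots via Lemma \ref{lemma:link_between_roots} into the claimed formula for the roots of $(L_\alpha f)'$.

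First I would observe that since $f(x) = x^{m-1}$ and $m-1$ is odd, we have
\[
(D_\alpha f)'(x) = (x+\alpha)^{m-2} + x^{m-2}.
\]
Because $m-2 = 2d$ with $d=(m-2)/2$ odd (as $m \equiv 0 \pmod 4$), the Frobenius gives
\[
(D_\alpha f)'(x) = \bigl((x+\alpha)^d + x^d\bigr)^2.
\]
Hence it suffices to show that $(x+\alpha)^d + x^d$ has $d-1$ simple roots in $\overline{\mathbb F}_2$; by Lemma \ref{lemma:link_between_roots}, this will furnish the $(d-1)/2$ distinct double roots of $(L_\alpha f)'$.

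Next I would locate the roots explicitly. Setting $\theta = (x+\alpha)/x$ (allowed for $x \neq 0$, and $x=0$ is clearly not a root since $\alpha \neq 0$), the equation $(x+\alpha)^d = x^d$ becomes $\theta^d = 1$. Since $d$ is odd and the characteristic is $2$, the polynomial $X^d - 1$ is separable and has $d$ distinct roots in $\overline{\mathbb F}_2$. The value $\theta = 1$ is excluded because it forces $\alpha = 0$, so we obtain exactly $d-1$ distinct roots
\[
z_\theta = \frac{\alpha}{1+\theta}, \qquad \theta \in \mu_d \setminus \{1\}.
\]
A direct calculation shows $z_\theta + \alpha = \alpha\theta/(1+\theta) = z_{\theta^{-1}}$, so the involution $x \mapsto x+\alpha$ on the root set corresponds to $\theta \mapsto \theta^{-1}$ on $\mu_d \setminus \{1\}$. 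Since $d$ is odd and the characteristic is $2$, $\theta^2 = 1$ forces $\theta = 1$, so this involution is fixed-point-free and partitions the $d-1$ roots into $(d-1)/2$ pairs $\{z_{\theta_i}, z_{\theta_i^{-1}}\}$; choosing one representative $\theta_i$ from each pair yields exactly the set described in the statement (i.e., $\theta_i\theta_j \neq 1$ for $i\neq j$).

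Finally, applying Lemma \ref{lemma:link_between_roots} with $z_i = \alpha/(1+\theta_i)$ gives the $(d-1)/2$ distinct double roots of $(L_\alpha f)'$ as
\[
\tau_i = T_\alpha(z_i) = z_i(z_i + \alpha) = \frac{\alpha}{1+\theta_i}\cdot\frac{\alpha\theta_i}{1+\theta_i} = \frac{\alpha^2\theta_i}{(1+\theta_i)^2},
\]
and a short algebraic manipulation (using $(1+\theta_i)^2 = 1+\theta_i^2$ in characteristic $2$) identifies this quantity with $\alpha^2/(1+\theta_i) + \alpha^2/(1+\theta_i^2)$, matching the formula in the statement. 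The only real obstacle is bookkeeping around characteristic $2$, in particular verifying that the roots really have multiplicity $2$ (not higher) in $(D_\alpha f)'$, which follows from the separability of $X^d - 1$ established above.
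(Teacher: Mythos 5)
Your proposal is correct and follows essentially the same route as the paper: reduce to the roots of $(D_\alpha f)'$ via the relation $(D_\alpha f)'=\alpha\,(L_\alpha f)'\circ T_\alpha$, substitute $\theta=(x+\alpha)/x$ to land on the $d$-th roots of unity with $x=\alpha/(1+\theta)$, and identify $T_\alpha(z_\theta)=T_\alpha(z_{\theta'})$ exactly when $\theta\theta'=1$. Your extra remarks (the Frobenius square $(D_\alpha f)'=\bigl((x+\alpha)^d+x^d\bigr)^2$, separability of $X^d-1$, and the fixed-point-free pairing) only make explicit what the paper leaves implicit.
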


\begin{proof}
We fix $\alpha\in \F_{2^n}^{\ast}$. 
By the previous lemma, it is sufficient to determine
the roots of $(D_{\alpha}f)'$, that is
the solutions of the equation
$x^{m-2}+(x+\alpha)^{m-2}=0$.
As these solutions are obviously different from $0$, 
it amounts to studying the solutions 
$\theta$
of $\left( \frac{x+\alpha}{x} \right)^{m/2-1}=1$.
For $\theta=1$ there is no corresponding solution $x$,
but for any other 
$(m/2-1)$-th root
of the unity $\theta$
there is one and only one solution $x= \frac{\alpha}{1+\theta}$.
To conclude that 
$(L_{\alpha}f)'$ has 
$m/4-1$ (that is $(d-1)/2$)
distinct roots of the claimed form, 
we use the equality (\ref{equation:derivee_de_Dalpha}) and the fact that
for $x=\frac{\alpha}{1+\theta}$ and $x'=\frac{\alpha}{1+\theta'}$, we have $T_{\alpha}(x)=T_{\alpha}(x')$ if and only if $\theta\theta'=1$.
\end{proof}

Following the Appendix of Geyer in \cite{JardenRazon}
we associate to any polynomial $g=\sum_{k=0}^{d}b_{d-k} x^{k} \in \mathbb{F}_q[x]$
of degree $d$
 a nonzero rational function $\Pi \in \mathbb{F}_2[b_0,\ldots,b_d][1/b_0]$
whose zeros correspond exactly to the polynomials with non-distinct critical values
(note that in \cite{JardenRazon}
$\Pi$ is actually a polynomial 
in $\mathbb{F}_2[b_0,\ldots,b_d]$
as $g$ is supposed there to be monic).
We will use the notation $\Pi_d$ to stress the dependance on the degree.
Recall that $\Pi_d$ is defined as follows in \cite{JardenRazon}
 \begin{equation}\label{equation_pi}
 \Pi_d(g):= \prod_{i \neq j} \left( g(\tau_i) - g(\tau_j) \right) 
 \end{equation}
 where the $\tau_i$ are the  (double) roots of $g'$.
In the following lemma,
which is an adaptation of Lemma 3.8 in \cite{AHV}
to handle the case
when $m \equiv 0 \pmod 4$,
we prove
that for a well chosen value of $N$ 
the rational function
$b_0^{N}\Pi_d \left( L_{\alpha}f \right) $
becomes a polynomial in $\mathbb{F}_2 [a_0,\ldots,a_m,\alpha]$
with useful 
homogeneity properties.

\begin{lemma}\label{lemma:informations_about_Pi}
Let $m\geqslant 8$ 
be an integer
such that $m\equiv 0 \pmod 4$.
We consider a degree $m$ polynomial
$f=\sum_{k=0}^{m}a_{m-k}x^{k}$ such that $a_1 \neq 0$
and the associated polynomial 
$L_{\alpha} f=\sum_{k=0}^{d}b_{d-k}x^{k}$.
If we set $d=(m-2)/2$ and $e=\binom{(d-1)/2}{2}$
 then
$b_{0}^{de} \Pi_{d} (L_{\alpha}f)$ 
is a polynomial in $\F_{2}[a_0,\ldots,a_m,\alpha]$
 each of whose terms contains a product of $(d+2)e$ terms $a_i$. 
This polynomial is 
homogeneous of degree $(6d+4)e$ 
when considering the weight $w$ such that $w(\alpha)=1$ and $w(a_i)=i.$
\end{lemma}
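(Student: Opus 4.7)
The plan is to follow the template of Lemma~3.8 in \cite{AHV} and adapt the bookkeeping to the case $d=(m-2)/2$ odd, which is what arises when $m\equiv0\pmod 4$. My first move is to use characteristic $2$ to rewrite $\Pi_d(L_\alpha f)=\Psi^2$, where $\Psi=\prod_{i<j}\big(L_\alpha f(\tau_i)+L_\alpha f(\tau_j)\big)$ has exactly $e=\binom{(d-1)/2}{2}$ factors and is symmetric in $\tau_1,\ldots,\tau_{(d-1)/2}$. Since $d$ is odd, only odd-index $k$ contribute to the derivative, so
\[
(L_\alpha f)'=b_0x^{d-1}+b_2x^{d-3}+\cdots+b_{d-1}=h(x^2)
\]
with $h(y)=b_0y^{(d-1)/2}+b_2y^{(d-3)/2}+\cdots+b_{d-1}$. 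The $y_i:=\tau_i^2$ are thus precisely the $(d-1)/2$ roots of $h$, and Vieta's formulas yield the key relation $\sigma_k(y_1,\ldots,y_{(d-1)/2})=b_{2k}/b_0$ for $1\leqslant k\leqslant (d-1)/2$.

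I would then expand $\Psi$ as a sum over tuples $(k_{ij})_{i<j}\in\{1,\ldots,d\}^e$ obtained by choosing a term $b_{d-k_{ij}}(\tau_i^{k_{ij}}+\tau_j^{k_{ij}})$ from each factor; the summand indexed by such a tuple has $b$-degree $e$, $b$-weight $2e(d+1)-2K$ with $K=\sum_{i<j}k_{ij}$, and $\tau$-degree $K$. The characteristic-$2$ identity $(\sum_\nu u_\nu)^2=\sum_\nu u_\nu^2$ then gives
\[
\Psi^2=\sum_{(k_{ij})}\Big(\prod_{i<j} b_{d-k_{ij}}^2\Big)\prod_{i<j}(y_i^{k_{ij}}+y_j^{k_{ij}}),
\]
and each second factor is a symmetric polynomial in the $y_i$'s of total $y$-degree $K$. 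Expressing it in elementary symmetric functions and substituting $\sigma_k(y)=b_{2k}/b_0$, every monomial takes the shape $\prod_\ell b_{2k_\ell}/b_0^r$ with $\sum k_\ell=K$ and $r\leqslant K\leqslant ed$. Hence multiplying by $b_0^{de}$ uniformly clears every denominator, so $b_0^{de}\Pi_d(L_\alpha f)$ lies in $\F_2[b_0,\ldots,b_d]$.

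A direct count then shows each resulting monomial has total $b$-degree $2e+r+(de-r)=(d+2)e$ and total weight
\[
\big(4e(d+1)-4K\big)+4K+2de=(6d+4)e,
\]
remarkably independent of $K$ and $r$. Finally, because $D_\alpha f=f(x+\alpha)+f(x)$ is $\F_2$-linear in the $a_j$'s and the triangular system defining $L_\alpha f$ is linear with unit diagonal, every monomial appearing in any $b_i$ contains exactly one factor $a_j$; combined with Proposition~\ref{proposition:Lalpha}(iv), substituting yields the announced polynomial in $\F_2[a_0,\ldots,a_m,\alpha]$ with exactly $(d+2)e$ factors $a_i$ per monomial, homogeneous of weight $(6d+4)e$.

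The heart of the argument is the coordinated degree/weight bookkeeping above: the uniform bound $r\leqslant K\leqslant ed$ is what justifies the single factor $b_0^{de}$, and the cancellation $-4K+4K=0$ is what produces the claimed homogeneity independently of the intermediate parameter $K$. Everything else is formal substitution together with facts already collected in Proposition~\ref{proposition:Lalpha}.
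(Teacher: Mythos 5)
Your argument is in substance correct and follows the same route the paper has in mind: the paper's own proof is just the remark that Lemma~3.8 of \cite{AHV} adapts \emph{mutatis mutandis}, and your writeup is exactly that adaptation (write $\Pi_d=\Psi^2$ in characteristic $2$, use that $d$ odd gives $(L_\alpha f)'=h(x^2)$, apply Vieta to $h$ to get $\sigma_k(y)=b_{2k}/b_0$, then do the isobaric bookkeeping and invoke the linearity of the $b_i$ in the $a_j$ together with Proposition~\ref{proposition:Lalpha}(iv)). The numerical bookkeeping checks out: $b$-degree $(de-r)+2e+r=(d+2)e$ and weight $(4e(d+1)-4K)+(4K+2r)+(2de-2r)=(6d+4)e$, and the bound $r\leqslant K\leqslant de$ is indeed what makes the single factor $b_0^{de}$ suffice.

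One step is misstated, though easily repaired: for a \emph{fixed} tuple $(k_{ij})$ the factor $\prod_{i<j}\bigl(y_i^{k_{ij}}+y_j^{k_{ij}}\bigr)$ is in general not symmetric in the $y_i$ (take, e.g., exponents $1,2,1$ on the three pairs of $y_1,y_2,y_3$), so you cannot expand it in elementary symmetric polynomials term by term as written. The fix is to group all tuples having the same multiset of exponents $\{k_{ij}\}$, i.e.\ the same monomial $\prod_{i<j}b_{d-k_{ij}}^{2}$: the permutation action on the $y_i$ permutes exactly these tuples among themselves, so each grouped coefficient is a symmetric polynomial with $\F_2$-coefficients, homogeneous in the $y_i$ of degree $K=\sum_{i<j}k_{ij}$, which is determined by the multiset. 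Since all of your degree and weight counts depend only on $K$ and on the multiset, the rest of the computation goes through verbatim after this regrouping.
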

\begin{proof}
Mutatis mutandis,
the proof can be read off  
from Lemma 3.8 in \cite{AHV}.
\end{proof}


\section{Polynomials of degree $2^r(2^{\ell}+1)$.}\label{section:specific_m}

Our strategy is now 
to prove that $b_0^{de}\Pi_d \left( L_{\alpha}f \right) $
has a simple leading coefficient 
when seen as a polynomial in $\alpha$ and when $f$ has degree $2^r(2^{\ell}+1)$.
First,
the following proposition gives an handy interpretation
which involves
the trace polynomials $P_{k}$ 
defined by
 $$P_{k}(x):=x+x^2+\cdots+x^{2^{k-1}}$$
for any integer $k \geqslant 1$.

\begin{proposition}\label{proposition:nbe_alpha_cvd}
Let 
 $r \geqslant 2$ and  $\ell\geqslant1$.
   We set $m=2^r(2^{\ell}+1)$,
 $d=\frac{m-2}{2}$ and  $e=\binom{(d-1)/2}{2}$.
We consider a  polynomial
 $f(x)=\sum_{k=0}^{m}a_{m-k}x^k$ 
 of degree $m$
 such that $a_1 \neq 0$
 and the associated polynomial
$L_{\alpha} f=\sum_{k=0}^{d}b_{d-k}x^{k}$ 
as above.

Recall that we  use the notation  
$\Pi_d(g)$ for the rational function
defined in  (\ref{equation_pi})
which describes the locus of polynomials $g$ with non-distinct critical values.
Thus
\begin{enumerate}[label=(\roman*)]
\item the indeterminate $a_0$ appears in the 
polynomial $b_{0}^{de} \Pi_{d} (L_{\alpha}f)$
with a power at most $2e$.
\item When seen as an element of  $\mathbb{F}_{2} [a_0,\ldots,a_m][\alpha]$
the polynomial $b_{0}^{de} \Pi_{d} (L_{\alpha}f)$
has degree at most
$(5d+4)e$.
Moreover, the only monomial of such a degree that can appear in 
$b_{0}^{de} \Pi_{d} (L_{\alpha}f)$ is
$a_0^{2e}
a_1^{de}
\alpha^{(5d+4)e}$.
 \item 
 When seen as an element of  $\mathbb{F}_{2} [a_0,\ldots,a_m][\alpha]$
the polynomial $b_{0}^{de} \Pi_{d} (L_{\alpha}f)$
has degree exactly
 $(5d+4)e$
if and only if 
for any choice of different roots $\tau_i$ and $\tau_j$ of $L_1{(x^{m-1})}'$
we have $P_{\ell}(\tau_i + \tau_j) \neq 0$
where $P_{\ell}$ is the $\ell$-th trace polynomial.
\end{enumerate}
\end{proposition}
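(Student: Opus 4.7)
The plan is to establish (i), (ii) and (iii) in that order, all hinging on a single observation. When $m = 2^r(2^\ell+1)$ with $r \geq 1$, Lucas' theorem shows that the only odd binomial coefficients $\binom{m}{k}$ occur at $k \in \{0, 2^r, 2^{r+\ell}, m\}$, so
$$D_\alpha(x^m) = \alpha^m + \alpha^{m-2^r}x^{2^r} + \alpha^{m-2^{r+\ell}}x^{2^{r+\ell}} = \bigl(\alpha^{2^\ell+1} + \alpha^{2^\ell}x + \alpha x^{2^\ell}\bigr)^{2^r}.$$
Since $D_\alpha(x^m)$ is symmetric under $x \mapsto x+\alpha$, so is the expression inside the $2^r$-th power; hence $L_\alpha(x^m)(y)$ is itself a $2^r$-th power in $y = x(x+\alpha)$, and in particular $(L_\alpha(x^m))'(y) = 0$.

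For (i), linearity of $L_\alpha$ together with $(L_\alpha(x^m))' = 0$ implies that $(L_\alpha f)'$ does not depend on $a_0$, so neither do the critical points $\tau_1,\ldots,\tau_N$ with $N = (d-1)/2$. Each of the $2e = N(N-1)$ factors $(L_\alpha f)(\tau_i) + (L_\alpha f)(\tau_j)$ in $\Pi_d(L_\alpha f)$ is therefore affine linear in $a_0$, bounding the $a_0$-degree of $\Pi_d(L_\alpha f)$ by $2e$; this bound carries over to $b_0^{de}\Pi_d(L_\alpha f)$ because $b_0 = a_1\alpha$ is $a_0$-free. For (ii), I combine (i) with the homogeneity provided by Lemma~\ref{lemma:informations_about_Pi}: every monomial of $b_0^{de}\Pi_d(L_\alpha f)$ has the form $\alpha^s\prod_{k=1}^{(d+2)e}a_{i_k}$ with $s + \sum_k i_k = (6d+4)e$. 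Since at most $2e$ of the $i_k$ equal zero by (i), the remaining $de$ factors contribute weight at least $1$ each, giving $\sum_k i_k \geq de$ and so $s \leq (5d+4)e$. Equality forces exactly $2e$ zeros and $de$ ones, singling out the unique top monomial $a_0^{2e}a_1^{de}\alpha^{(5d+4)e}$.

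For (iii), I compute the coefficient of this top monomial by specialising $a_j = 0$ for all $j \geq 2$, which does not affect the coefficient in question. The scaling identity $L_\alpha(x^k)(\alpha^2 v) = \alpha^k L_1(x^k)(v)$ then yields $\tilde g(v) := L_\alpha f(\alpha^2 v) = a_0\alpha^m F(v) + a_1\alpha^{m-1}G(v)$ with $F := L_1(x^m)$ and $G := L_1(x^{m-1})$. Because $F' = 0$, the critical points of $\tilde g$ coincide with the $N$ distinct critical points $\tau_1,\ldots,\tau_N$ of $G$ produced by Lemma~\ref{lemma:case_of_monomial}, and the change of variable preserves critical values, so $\Pi_d(L_\alpha f) = \prod_{i\neq j}\bigl(a_0\alpha^m(F(\tau_i)+F(\tau_j)) + a_1\alpha^{m-1}(G(\tau_i)+G(\tau_j))\bigr)$. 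Extracting the $(a_0\alpha^m)^{2e}$ term, multiplying by $b_0^{de} = a_1^{de}\alpha^{de}$, and checking $de + 2em = (5d+4)e$ identifies the coefficient of $a_0^{2e}a_1^{de}\alpha^{(5d+4)e}$ with $\prod_{i\neq j}(F(\tau_i) + F(\tau_j))$.

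The final step is to identify this product with $\prod_{i \neq j}P_\ell(\tau_i+\tau_j)^{2^r}$. Because constants are killed under differences, $F(y) = 1 + \tilde Q_\ell(y)^{2^r}$ where the linearised polynomial $\tilde Q_\ell$ is determined by $\tilde Q_\ell(x^2+x) = x + x^{2^\ell} = Q_\ell(x)$. Frobenius and the $\mathbb{F}_2$-linearity of $Q_\ell$ give $F(\tau_i) + F(\tau_j) = Q_\ell(x_i + x_j)^{2^r}$ for any Artin--Schreier preimages $x_i^2 + x_i = \tau_i$. Setting $\xi = x_i + x_j$ so that $\tau_i + \tau_j = \xi^2 + \xi$, the telescoping identity $P_\ell(\xi^2+\xi) = \sum_{k=0}^{\ell-1}(\xi^{2^{k+1}} + \xi^{2^k}) = \xi + \xi^{2^\ell} = Q_\ell(\xi)$ yields $F(\tau_i) + F(\tau_j) = P_\ell(\tau_i+\tau_j)^{2^r}$, which completes (iii). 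The main obstacle is this last chain of identities, which requires both the $2^r$-th power factorization of $L_1(x^m)$ and the specific telescoping of $P_\ell$; once those are in place, everything else reduces to bookkeeping on weights and to the scaling trick $u = \alpha^2 v$.
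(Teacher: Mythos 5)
Your proposal is correct and follows essentially the same route as the paper: bound the $a_0$-degree by $2e$ using the special shape of $L_{\alpha}(x^m)$ for $m=2^r(2^{\ell}+1)$, pin down the unique top monomial via the weighted homogeneity of Lemma~\ref{lemma:informations_about_Pi}, and identify its coefficient with $\prod_{i\neq j}P_{\ell}(\tau_i+\tau_j)^{2^r}$ by specialising to $a_0x^m+a_1x^{m-1}$. Your variations (Lucas/$2^r$-th power factorisation in place of the explicit formula for $L_{\alpha}(x^m)$, the scaling $L_{\alpha}(x^k)(\alpha^2 v)=\alpha^k L_1(x^k)(v)$ instead of evaluating at $\alpha=1$, and the Artin--Schreier telescoping instead of reading $1+P_{\ell}^{2^r}$ directly) are only cosmetic repackagings of the paper's argument.
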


\begin{proof}

We are first looking for the coefficient $a_0$
in the polynomial 
\begin{equation}\label{polynomePi}
b_0^{de}\Pi_d (L_{\alpha}f) = 
(a_1 \alpha)^{de} \prod_{i<j}
\left( 
\sum_{k=0}^{d}b_{d-k}^2(\tau_{i}^{2k}+\tau_{j}^{2k})
\right).
\end{equation}
Our point of departure is
that 
when $m$ admits the special form $m=2^r(2^{\ell}+1)$ then
$L_{\alpha} (x^{m})$ has the following  very simple expression
\begin{equation}\label{equation:L1xm}
L_{\alpha}(x^m)=\alpha^m+\sum_{k=0}^{\ell-1} \alpha^{m-2^{r+k+1}} x^{2^{r+k}}.
\end{equation}
It can be proved easily by checking
that the composition of the right hand side 
with $x(x+\alpha)$
is actually $x^m+(x+\alpha)^m$.

Then the linearity of $L_{\alpha}$ yields
 that when $f=a_0 x^{m}+\cdots$ 
the indeterminate $a_0$ appears in few coefficients $b_i$ 
of $L_{\alpha} f$,
namely in $b_d, b_{d-2^{r}}, b_{d-2^{r+1}},\ldots, b_{d-2^{r+\ell-1}}.$

Actually $b_d$ does not contribute
in the product (\ref{polynomePi})
since the terms 
$\tau_{i}^{2k}+\tau_{j}^{2k}$
simplify for $k=0$
in the sum between parentheses.
Also, the terms  $\tau_i$
do not give rise to the indeterminate $a_0$. 
Indeed, any monomial in the $b_i$ in (\ref{polynomePi})
will be 
multiplied 
 by a polynomial $Q$
in the variables $\tau^2_1,\ldots,\tau^2_{(d-1)/2}$
and this polynomial is
invariant under the action of the symmetric group ${\frak{S}}_{(d-1)/2}$.
But
 $$\left( L_{\alpha} f \right)' (x)= b_0x^{d-1}+b_2x^{d-3}+\cdots+b_{d-3}x^2+b_{d-1} 
 = b_0(x^2+\tau_1^2)\cdots(x^2+\tau_{(d-1)/2}^2)$$
thus $Q$ 
 belongs to $\mathbb{F}_2 [\frac{b_2}{b_0},\frac{b_4}{b_0},\ldots,\frac{b_{d-1}}{b_0}]$.
Again, the indeterminate $a_0$
does not appear in $Q$ 
as
${d-2^{r}}, {d-2^{r+1}},\ldots, {d-2^{r+\ell-1}}$
are odd.

So to investigate where the largest power of $a_0$ appears in
(\ref{polynomePi})
 we are reduced to study
\begin{equation}\label{terme_a_etudier} 
 (a_1 \alpha)^{de} \prod_{i<j}
\left(b_{d-2^{r}}^{2}(\tau_{i}+\tau_{j})^{2^r}+b_{d-2^{r+1}}^{2}(\tau_{i}+\tau_{j})^{2^{r+1}}+\cdots
+b_{d-2^{r+l}}^{2}(\tau_{i}+\tau_{j})^{2^{r+\ell-1}}\right).
 \end{equation}
This largest power is bounded by $2\binom{(d-1)/2}{2}=2e$
and point (\textit{i)} is proven.

To prove point \textit{(ii)},
consider a monomial $a_0^{u_0}a_1^{u_1} \ldots a_m^{u_m} \alpha^v$
arising in 
$b_0^{de}\Pi_d (L_{\alpha}f)$.
Recall that by
Lemma \ref{lemma:informations_about_Pi} we count
$2e+de$ indeterminates $a_i$
 and we have 
$u_1+2u_2+\cdots+mu_m+v=(6d+4)e$.
As we have just proved that $u_0 \leqslant 2e$,
it implies that
either $u_0=2e$, $u_1=de$,
$u_2=u_3=\cdots=0$ and $v=(5d+4)e$, or
in any other case $v<(5d+4)e$.

To treat point (\textit{iii) },
 let us now determine when the monomial 
$a_0^{2e}
a_1^{de}
\alpha^{(5d+4)e}$
does appear in $b_0^{de}\Pi_d (L_{\alpha}f)$.
We use the expression of the coefficients $b_{d-2^{r+k}}$ obtained above to 
transform (\ref{terme_a_etudier}) into
$$ a_{0}^{2e}
\left[ a_{1}^{de}\alpha^{de}
\prod_{i<j}\left(\sum_{k=0}^{\ell-1}
(\alpha^{m-2^{r+k+1}})^{2}(\tau_{i}+\tau_{j})^{2^{r+k}}\right) \right]. $$
We know that the expression between brackets is a polynomial in
$\mathbb{F}_2[a_0,\ldots,a_m][\alpha]$ 
with no term $a_0$ and we wonder
if the monomial $a_1^{de} \alpha^{(5d+4)e}$ does appear.
To this end it is sufficient to evaluate it
when $\alpha=1$, $a_1=1$ and $a_2=\cdots=a_m=0$
that is to consider
$ \displaystyle{
\prod_{i<j} 
 P^{2^r}_{\ell} 
 \left(
 \tau_{i}+\tau_{j}
 \right)
  } $
 where the $\tau_i$ are the $(d-1)/2$ different roots of
 $L_{1}(x^{m-1})'$ 
 which are described in Lemma (\ref{lemma:case_of_monomial}).
 It concludes the proof.
 \end{proof}
 
The Proposition \ref{proposition:Pl_taui_plus_tauj_non_nul}
will exploit this interpretation. To make its proof more readable
we  now provide several lemmas, 
the first of them being an easy
arithmetic result.

\begin{lemma}\label{lemma:pgcd}
Let  $r \geqslant 2$ and  $\ell\geqslant1$.
We set $m=2^r(2^{\ell}+1)$ and $d=(m-2)/2$.
\begin{enumerate}[label=(\roman*)]
\item  If $\gcd(r,\ell)=1$ then $\gcd(d,2^{2 \ell}-1)=1$,
\item If $\gcd(r,\ell)=2$ then $\gcd(d,2^{2 \ell}-1)=3$.
\end{enumerate}
 \end{lemma}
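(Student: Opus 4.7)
The plan is to reduce the computation to the classical identity $\gcd(2^a-1, 2^b-1) = 2^{\gcd(a,b)}-1$, after exploiting the factorization of $2^{2\ell}-1$.

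First, I would rewrite $d$ in a convenient form: since $m = 2^r(2^\ell+1)$, we have
\[
d = \frac{m-2}{2} = 2^{r-1}(2^\ell+1) - 1.
\]
Next, I would use the factorization $2^{2\ell}-1 = (2^\ell-1)(2^\ell+1)$ to split the computation of $\gcd(d, 2^{2\ell}-1)$ into two factors.

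For the factor $2^\ell+1$: modulo $2^\ell+1$ the expression $2^{r-1}(2^\ell+1)$ vanishes, so $d \equiv -1 \pmod{2^\ell+1}$, which gives immediately $\gcd(d, 2^\ell+1) = 1$.

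For the factor $2^\ell-1$: modulo $2^\ell-1$ we have $2^\ell \equiv 1$, so
\[
2^{r-1}(2^\ell+1) \equiv 2^{r-1} \cdot 2 = 2^r \pmod{2^\ell-1},
\]
hence $d \equiv 2^r-1 \pmod{2^\ell-1}$. Therefore
\[
\gcd(d, 2^\ell-1) = \gcd(2^r-1, 2^\ell-1) = 2^{\gcd(r,\ell)}-1,
\]
where I use the standard identity $\gcd(2^a-1, 2^b-1) = 2^{\gcd(a,b)}-1$.

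Combining the two: since $2^\ell-1$ and $2^\ell+1$ are coprime (their difference is $2$, and both are odd for $\ell \geqslant 1$), we obtain $\gcd(d, 2^{2\ell}-1) = \gcd(d, 2^\ell-1) = 2^{\gcd(r,\ell)}-1$. Specializing gives $1$ when $\gcd(r,\ell)=1$ and $3$ when $\gcd(r,\ell)=2$, as required. There is no real obstacle here; the only point that requires a small verification is the claim $\gcd(2^\ell-1, 2^\ell+1)=1$, which is immediate from the fact that any common divisor would divide $2$, while both quantities are odd.
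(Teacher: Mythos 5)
Your proof is correct and follows essentially the same route the paper sketches: factor $2^{2\ell}-1=(2^\ell-1)(2^\ell+1)$ into coprime parts and analyse $d=2^{r-1}(2^\ell+1)-1$ modulo each factor, the identity $\gcd(2^r-1,2^\ell-1)=2^{\gcd(r,\ell)}-1$ playing the role of the paper's order-of-$2$ argument. Your version is fully written out and in fact yields the uniform formula $\gcd(d,2^{2\ell}-1)=2^{\gcd(r,\ell)}-1$, of which the two cases of the lemma are specializations.
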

 \begin{proof} 
For example,
start with the observation that
if $t$ divides $2^{2 \ell}-1$ then one can write $t=ab$ where 
$a$ and $b$ are divisors of  $2^{\ell}-1$ and $2^{\ell}+1$.
Thus work modulo $a$ and $b$
and  study the order of $2$ in $(\mathbb{Z}/b\mathbb{Z})^{\ast}$.
\end{proof}

We have compiled in the following lemma
some basic computational results
which will prove useful establishing
Proposition \ref{proposition:Pl_taui_plus_tauj_non_nul}.
\begin{lemma}\label{lemma:calculs_Pl}
Fix two integers   $r \geqslant 2$ 
and  $\ell \geqslant1$ and  set $m=2^{r}(2^{\ell}+1)$.
We have
\begin{enumerate}[label=(\roman*)]
\item  $ L_{1}(x^{m-1}) = 
x^{2^r-1} + \left( 1+ \sum_{k=r}^{r+\ell-1} x^{2^{k}}
 \right) 
\sum_{k=0}^{r-1} x^{2^{k}-1}$,
\item $x^2(L_{1}(x^{m-1}))' = P_{r}^2(x)+P_{\ell}^{2^r}(x)P_{r-1}^{2}(x)$.
\item If $\tau_i$ is a root of $L_1(x^{m-1})'$ then    $P_{r-1}(\tau_{i}) \neq 0$.
\item For any choice of different roots $\tau_i$ and $\tau_j$ of $L_1(x^{m-1})'$ 
such that  $P_{\ell}(\tau_i+\tau_j) = 0$ we have 
$P_{r-1}(\tau_{i}+\tau_j) \neq 0$ and 
$$P_{\ell}(\tau_{i})^{2^{r-1}} =\frac{P_{r}(\tau_{i})}{P_{r-1}(\tau_{i})}=\frac{P_{r}(\tau_{i}+\tau_{j})}{P_{r-1}(\tau_{i}+\tau_{j})}=
\frac{P_{r}(\tau_{j})}{P_{r-1}(\tau_{j})}=P_{\ell}(\tau_{j})^{2^{r-1}}.$$
\end{enumerate}
 \end{lemma}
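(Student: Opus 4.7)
The four parts are best handled in order, as (ii), (iii), (iv) all build on (i) and on each other. For (i), the uniqueness of $L_1 f$ from Proposition~\ref{proposition:Lalpha} (the same unit-triangular argument applies to the monomial $f=x^{m-1}$ even though $m-1\not\equiv 0\pmod 4$) reduces the identity to substituting $y=x^2+x$ into the right-hand side and checking that the result equals $(x+1)^{m-1}+x^{m-1}$. The key simplification is the telescoping identity $P_k(x^2+x)=x+x^{2^k}$ in characteristic~$2$, which gives both $P_\ell(x^2+x)^{2^r}=x^{2^r}+x^{2^{r+\ell}}$ and $\sum_{k=0}^{r-1}(x^2+x)^{2^k-1}=(x+x^{2^r})/(x(x+1))$; combined with $(x+1)^m=x^m+x^{2^{r+\ell}}+x^{2^r}+1$, read off from equation~(\ref{equation:L1xm}) at $\alpha=1$, the rest is a short algebraic manipulation.

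Differentiating the formula of (i) proves (ii): in characteristic~$2$ all powers $x^{2^k}$ have derivative zero, so the factor $\sum_{k=r}^{r+\ell-1}x^{2^k}$ contributes nothing to the derivative, and after multiplying by $x^2$ and collecting terms one recognizes $P_r(x)^2=\sum_{k=1}^{r}x^{2^k}$, $P_{r-1}(x)^2=\sum_{k=1}^{r-1}x^{2^k}$, and $P_\ell(x)^{2^r}=\sum_{k=r}^{r+\ell-1}x^{2^k}$, yielding the claimed equation directly.

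For (iii), assume for contradiction that $\tau$ is a root of $L_1(x^{m-1})'$ with $P_{r-1}(\tau)=0$. Part~(ii) evaluated at $\tau$ forces $P_r(\tau)^2=0$, and the identity $P_r-P_{r-1}=x^{2^{r-1}}$ then gives $\tau=0$; but evaluating the differentiated form of~(i) at $x=0$ returns $1$ (the only $x^0$ contribution comes from the $k=1$ summand $x^{2-2}=1$ in the inner sum), so $0$ is not a root. For (iv), since $P_r$, $P_{r-1}$, $P_\ell$ are $\mathbb F_2$-linear, the hypothesis $P_\ell(\tau_i+\tau_j)=0$ is equivalent to $P_\ell(\tau_i)=P_\ell(\tau_j)$. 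Parts~(ii) and~(iii) together, after taking square roots in characteristic~$2$, give the relation $P_\ell(\tau)^{2^{r-1}}=P_r(\tau)/P_{r-1}(\tau)$ at each root $\tau$ of $L_1(x^{m-1})'$, so the ratios at $\tau_i$ and $\tau_j$ coincide; call the common value $c$. If we had $P_{r-1}(\tau_i+\tau_j)=0$, then additivity would give $P_{r-1}(\tau_i)=P_{r-1}(\tau_j)$, hence $P_r(\tau_i)=P_r(\tau_j)$, and so $\tau_i^{2^{r-1}}=\tau_j^{2^{r-1}}$, contradicting $\tau_i\neq \tau_j$. The remaining equality $c=P_r(\tau_i+\tau_j)/P_{r-1}(\tau_i+\tau_j)$ then follows immediately from $P_r(\tau_i)=cP_{r-1}(\tau_i)$, $P_r(\tau_j)=cP_{r-1}(\tau_j)$, and additivity applied to the numerator and denominator.

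The main obstacle is Part~(i): once the closed form for $L_1(x^{m-1})$ is in hand, (ii)--(iv) follow essentially mechanically. The bookkeeping with the several trace-polynomial identities used to verify $(\text{RHS})(x^2+x)=x^{m-1}+(x+1)^{m-1}$ is where essentially all the real computation of the lemma is concentrated.
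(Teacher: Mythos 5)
Your proposal is correct and follows essentially the same route as the paper: verify (i) by composing the right-hand side with $x(x+1)$ and matching it with $D_1(x^{m-1})$, differentiate and use $P_{r-1}^2(x)+x^{2^r}=P_r^2(x)$ for (ii), and deduce (iii) and (iv) from (ii), linearity, and the ratio identity $P_\ell(\tau)^{2^{r-1}}=P_r(\tau)/P_{r-1}(\tau)$. The only cosmetic difference is in (iii), where you rule out $\tau=0$ by evaluating the derivative at $0$ instead of invoking the explicit nonzero roots from Lemma \ref{lemma:case_of_monomial}; both are fine.
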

 \begin{proof}
By definition of $L_1$  it is sufficient 
for the first point
to compute the composition of the right hand side of the equality and to find $D_1(x^{m-1})$, 
that is $(x+1)^{m-1}+x^{m-1}$.

Just differentiate the former equality and use
the relation  
 $P_{r-1}^2 (x) + x^{2^r}=P_{r}^2(x)$
 to get the second point.

If $P_{r-1}$ vanishes at a root $\tau_i$ of $L_1(x^{m-1})'$, point \textit{(ii)}
leads to $P_{r}(\tau_i)=0$. But the relation between $P_{r-1}$ and $P_r$ above
implies that $\tau_i=0$, a contradiction with Lemma \ref{lemma:case_of_monomial}.

Under the hypotheses of the last item, 
point \textit{(ii)} applies to get 
$P_{r}^2(\tau_i)+P_{\ell}^{2^r}(\tau_i)P_{r-1}^{2}(\tau_i)=0$
and thus $P_{\ell}(\tau_{i})^{2^{r-1}} =\frac{P_{r}(\tau_{i})}{P_{r-1}(\tau_{i})}$,
 one of the claimed results. 
Now adding the equalities \textit{(ii)} for $x=\tau_i, \tau_j$ 
and using $P_{\ell}(\tau_i)=P_{\ell}(\tau_j)$ lead to
$P_r^2(\tau_i+\tau_j)+P_l^{2{r}}(\tau_i)P_{r-1}^2(\tau_i+\tau_j)=0$.
Again, if $P_{r-1}(\tau_i+\tau_j)=0$ we obtain $P_r(\tau_i+\tau_j)=0$
and thus $\tau_i=\tau_j$, which is impossible.
We have just proven that $P_{r-1}(\tau_i+\tau_j)$ is nonzero
and so we can write 
$P_{\ell}(\tau_{i})^{2^{r-1}} =
\frac{P_{r}(\tau_{i}+\tau_{j})}{P_{r-1}(\tau_{i}+\tau_{j})}$.
 \end{proof}
 
\begin{proposition}\label{proposition:Pl_taui_plus_tauj_non_nul}
We fix  $r \geqslant 2$ and  $\ell\geqslant1$ and %
we set $m=2^r(2^{\ell}+1)$.
Recall that we denote by  $P_{k}$ the $k$-th trace polynomial.
Thus
 $\gcd(r,\ell) \leqslant 2$ if and only if
for any choice of different roots $\tau_i$ and $\tau_j$ of $L_1{(x^{m-1})}'$
we have $P_{\ell}(\tau_i + \tau_j) \neq 0$.

\end{proposition}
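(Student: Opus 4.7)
I will exploit the description of the roots $\tau_i$ given in Lemma \ref{lemma:case_of_monomial}: setting $u_i := 1/(1+\theta_i)$ with $\theta_i$ a $d$th root of unity different from $1$, one has $\tau_i = u_i + u_i^2$, and $\tau_i = \tau_j$ iff $\theta_j \in \{\theta_i, \theta_i^{-1}\}$. Since $P_\ell$ is $\mathbb{F}_2$-linear and satisfies the telescoping identity $P_\ell(u) + P_\ell(u)^2 = u + u^{2^\ell}$, one computes $P_\ell(\tau_i) = u_i + u_i^{2^\ell}$, so $P_\ell(\tau_i + \tau_j) = 0$ iff $u_i + u_j \in \mathbb{F}_{2^\ell}$. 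A short calculation rewrites this as $z \in \mathbb{F}_{2^\ell}^* \setminus \{1\}$ where $z := (\theta_i + \theta_j)/(1 + \theta_i\theta_j)$ (the case $z = 1$ would force $\theta_i = 1$ or $\theta_j = 1$).

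The arithmetic input I need is that $\gcd(d, 2^\ell - 1) = 2^g - 1$ where $g := \gcd(r,\ell)$. This follows from $d \equiv 2^r - 1 \pmod{2^\ell - 1}$ (since $d = 2^{r-1}(2^\ell+1) - 1$) and the classical identity $\gcd(2^r - 1, 2^\ell - 1) = 2^g - 1$. Combined with $d \equiv -1 \pmod{2^\ell + 1}$, which gives $\gcd(d, 2^\ell + 1) = 1$, one deduces $\gcd(d, 2^{2\ell} - 1) = 2^g - 1$, and consequently the $d$th roots of unity lying in $\mathbb{F}_{2^{2\ell}}$ are exactly the elements of $\mathbb{F}_{2^g}^*$.

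For the implication $\gcd(r,\ell) \geq 3 \Rightarrow$ existence of a bad pair: since $\mathbb{F}_{2^g}^*$ sits inside the group of $d$th roots of unity and has at least $7$ elements, I pick distinct $\theta_i, \theta_j \in \mathbb{F}_{2^g}^* \setminus \{1\}$ with $\theta_i\theta_j \neq 1$; then $z \in \mathbb{F}_{2^g}^* \subseteq \mathbb{F}_{2^\ell}^*$ yields $P_\ell(\tau_i + \tau_j) = 0$ with $\tau_i \neq \tau_j$.

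For the converse direction, assume $g \leq 2$ and that such $\theta_i, \theta_j$ exist. Set $\alpha := u_i + u_j \in \mathbb{F}_{2^\ell}$ and $Y := P_\ell(\tau_i) = u_i + u_i^{2^\ell}$. The telescoping identity $(\alpha + \alpha^2)^{2^k} = \alpha^{2^k} + \alpha^{2^{k+1}}$ gives $P_s(\alpha + \alpha^2) = \alpha + \alpha^{2^s}$ for every $s \geq 1$, and since $\tau_i + \tau_j = \alpha + \alpha^2$, Lemma \ref{lemma:calculs_Pl}(iv) yields
\[
Y^{2^{r-1}} \;=\; \frac{P_r(\tau_i+\tau_j)}{P_{r-1}(\tau_i+\tau_j)} \;=\; \frac{\alpha + \alpha^{2^r}}{\alpha + \alpha^{2^{r-1}}} \;\in\; \mathbb{F}_{2^\ell}.
\]
Bijectivity of the $2^{r-1}$-th power map on $\mathbb{F}_{2^\ell}$ then upgrades this to $Y \in \mathbb{F}_{2^\ell}$, so the identity $u_i + u_i^{2^\ell} = Y$ is preserved by $x \mapsto x^{2^\ell}$, forcing $u_i^{2^{2\ell}} = u_i$ and hence $\theta_i \in \mathbb{F}_{2^{2\ell}}$; combined with $\theta_i^d = 1$, this gives $\theta_i \in \mathbb{F}_{2^g}^*$. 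For $g = 1$ this contradicts $\theta_i \neq 1$; for $g = 2$ the only possibilities are $\{\theta_i, \theta_j\} = \{\omega, \omega^2\}$ (with $\omega$ a primitive cube root of unity), which gives $\theta_i\theta_j = 1$, again contradicting the hypotheses. The step I expect to require the most care is the clean invocation of Lemma \ref{lemma:calculs_Pl}(iv) promoting $Y$ from $\overline{\mathbb{F}}_2$ into the subfield $\mathbb{F}_{2^\ell}$.
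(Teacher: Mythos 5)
Your proof is correct, and while its backbone coincides with the paper's (the parametrization $\tau=u+u^2$, $u=1/(1+\theta)$ from Lemma \ref{lemma:case_of_monomial}, the key identity of point \textit{(iv)} of Lemma \ref{lemma:calculs_Pl} to place $P_{\ell}(\tau_i)$ in $\mathbb{F}_{2^{\ell}}$, the deduction $u_i^{2^{2\ell}}=u_i$, and finally a gcd computation on $d$ and $2^{2\ell}-1$), you deviate in two worthwhile ways. First, you prove the uniform statement $\gcd(d,2^{2\ell}-1)=2^{g}-1$ with $g=\gcd(r,\ell)$, which subsumes Lemma \ref{lemma:pgcd} and simultaneously powers your construction in the direction $g\geqslant 3$: you pick $\theta_i,\theta_j\in\mathbb{F}_{2^{g}}^{\ast}\setminus\{1\}$ with $\theta_i\theta_j\neq 1$ and get $u_i+u_j\in\mathbb{F}_{2^{g}}\subseteq\mathbb{F}_{2^{\ell}}$, whereas the paper instead takes two distinct nonzero roots of $P_a$ ($a=\gcd(r,\ell)$) and checks via point \textit{(ii)} of Lemma \ref{lemma:calculs_Pl} that they are roots of $L_1(x^{m-1})'$; both constructions are valid. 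Second, your endgame in the case $g=2$ is genuinely shorter: applying the membership argument to \emph{both} indices forces $\{\theta_i,\theta_j\}=\{\omega,\omega^2\}$, hence $\theta_i\theta_j=1$, contradicting $\tau_i\neq\tau_j$, and this bypasses the paper's discussion ($P_{\ell}(\tau_i)\in\mathbb{F}_2$, exclusion of the value $1$, the criterion ``$P_k(u)=0$ iff $4\mid k$'' for $u\in\mathbb{F}_4\setminus\mathbb{F}_2$, and the choice $\tau_i\neq 1$). Two small polish items: make explicit that $\theta_j\in\mathbb{F}_{2^{g}}^{\ast}$ follows by the symmetric application of point \textit{(iv)} of Lemma \ref{lemma:calculs_Pl} (which gives $P_{\ell}(\tau_j)^{2^{r-1}}$ equal to the same ratio), since you only derive it for $\theta_i$; and avoid reusing the letter $\alpha$ for $u_i+u_j$, which collides with the paper's $\alpha$ (here $\alpha=1$). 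The auxiliary quantity $z=(\theta_i+\theta_j)/(1+\theta_i\theta_j)$ is correctly analyzed (via $1/(u_i+u_j)=1+1/z$) but is not actually needed in either direction.
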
 
\begin{proof}
We set  $m=2^r(2^{\ell}+1)$ with
$r \geqslant 2$ and  $\ell\geqslant1$ such that $\gcd({r,\ell}) \leqslant 2$.
Let  $\tau_i$ and $\tau_j$ be two different roots of $L_1(x^{m-1})'$
such that $P_{\ell}(\tau_i + \tau_j)=0$.
A classical property of the trace polynomials 
imply that $\tau_i + \tau_j$ belongs to $\mathbb{F}_{2^{\ell}}$ and so does
$\frac{P_{r}(\tau_i+\tau_j)}{P_{r-1}(\tau_i+\tau_j)}$.
(Remember that by  Lemma \ref{lemma:calculs_Pl}
$P_{r-1}(\tau_i+\tau_j)$ is nonzero.) 

By point \textit{(iii)} of Lemma \ref{lemma:calculs_Pl} again, 
 we know that $P_{\ell}(\tau_{i})^{2^r}$ lies in $\mathbb{F}_{2^{\ell}}$
 and thus $P_{\ell}(\tau_{i})$, too.
 Now we use the expression 
$\tau_{i}=\frac{1}{1+\theta_{i}}+\frac{1}{1+\theta_{i}^2}$
given by Lemma \ref{lemma:case_of_monomial}
where $\theta_i$  is a
$d$-th root of the unity in 
$\overline{\F}_{2}\setminus\{ 1\}$.
Substituting into $P_{\ell}(\tau_{i}+\tau_{i}^{2^{\ell}})=0$ 
leads to a telescopic sum which simplifies into
$\frac{1}{1+\theta_{i}}=\frac{1}{1+ \theta_{i}^{2^{2\ell}}}$
which gives $\theta_{i}^{2^{2\ell}-1}=1$.
Since 
$\theta_i^d=1$
 it follows that $\theta_i^{\gcd(2^{2\ell}-1,d)}=1$.

If $\gcd(\ell,r)=1$ then Lemma \ref{lemma:pgcd} gives
$\gcd(2^{2\ell}-1,d)=1$, so $\theta_i=1$, a contradiction.

We now turn to the case $\gcd(\ell,r)=2$.
This time Lemma \ref{lemma:pgcd} asserts $\gcd(2^{2\ell}-1,d)=3$.
We deduce that  $\theta_i \in \mathbb{F}_4$ 
and so $\tau_i \in \mathbb{F}_4$.
As $\ell$ is even, we deduce that $\tau_i \in \mathbb{F}_{2^{\ell}}$ and
in consequence, $P_{\ell} (\tau_i) \in \mathbb{F}_2$.
One can show that $P_{\ell}(\tau_i) \neq 1$,
otherwise point \textit{(ii)} of
Lemma \ref{lemma:calculs_Pl}
would imply $P_{r-1}(\tau_i)=P_r(\tau_i)$
and thus $\tau_i=0$,
a contradiction
with the expression of $\tau_i$ given by Lemma
\ref{lemma:case_of_monomial}.
So $P_{\ell}(\tau_i)=0$.

Now point \textit{(iii)} of Lemma  \ref{lemma:calculs_Pl} also implies that 
$P_r(\tau_i)=0$.
But for any $u \in \mathbb{F}_4 \setminus \{0,1\}$ 
and for any positive even integer $k$
we have $P_k(u)=0$ if and only if $4$ divides $k$,
as $P_k(u)= \sum_{s=0}^{k/2-1} (u+u^2)^{2^{2s}}$.
We know that $\tau_i \neq 0$
and when starting the proof with two different roots
$\tau_i$ and $\tau_j$  of $L_1(x^{m-1})'$, 
we could have choosen $\tau_i \neq 1$.
Consequently $4$ divides $\ell$ and $r$, a contradiction.

Conversely, suppose that $a:=\gcd(r,\ell) \geqslant 3$.
Since $a$ divides $r$ and $\ell$
then  $P_a$ divides $P_{r}$ and $P_{\ell}$
(juste write $P_{ab}=\sum_{k=0}^{b-1} P_a^{2^{ka}}$).
As $a \geqslant 3$ and $P_a$ is separable
then  there exist $\tau_i$ and $\tau_j$ 
two different nonzero roots of $P_a$
in $\overline{\mathbb{F}}_2$.
From point \textit{(ii)} of Lemma \ref{lemma:calculs_Pl}
we deduce that $\tau_i$ and $\tau_j$ are roots of $L_1(x^{m-1})'$.
But by linearity we also have $P_{\ell}(\tau_i+\tau_j)=0$ and we are done.
\end{proof}

We can now bring together
these different ingredients 
to bound the number of $\alpha$ such that 
 $L_{\alpha}f$ does not have distinct critical values, 
that is such that
condition I.b
fails.

\begin{theorem} 
Let 
 $r \geqslant 2$ and  $\ell\geqslant1$.
We set $m=2^r(2^{\ell}+1)$, $d=(m-2)/2$ and $e=\binom{(d-1)/2}{2}$.
If $\gcd(r,\ell)=1$ or $2$ then
for any positive integer $n$, for any choice of the coefficients $a_i$ in  $\mathbb{F}_{2^n}$ 
such that $a_0 \neq 0$ and $a_1 \neq 0$, 
the number of $\alpha\in \overline{\F}_{2}^{\ast}$
such that the polynomial $L_{\alpha}f$ associated to 
 $f(x)=\sum_{k=0}^{m}a_{m-k}x^k$ 
 has not distinct critical values is 
  at most $(5d+4)e.$ 
\end{theorem}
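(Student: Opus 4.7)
The plan is to combine three earlier results into a single degree-counting argument: Proposition \ref{proposition:nbe_alpha_cvd} (parts (ii) and (iii)), Proposition \ref{proposition:Pl_taui_plus_tauj_non_nul}, and the construction of $\Pi_d$ itself. The basic idea is to view $b_0^{de}\Pi_d(L_{\alpha} f)$ as a univariate polynomial in $\alpha$ over $\mathbb{F}_{2^n}$ (after specializing $a_0,\ldots,a_m$) and show that under our hypotheses its degree in $\alpha$ is exactly $(5d+4)e$, so it has at most that many roots in $\overline{\mathbb{F}}_2$.

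First I would invoke Proposition \ref{proposition:nbe_alpha_cvd}(ii), which asserts that, viewed as an element of $\mathbb{F}_2[a_0,\ldots,a_m][\alpha]$, the polynomial $b_0^{de}\Pi_d(L_\alpha f)$ has degree in $\alpha$ at most $(5d+4)e$, and moreover the only monomial of that maximal $\alpha$-degree which can possibly appear is $a_0^{2e}a_1^{de}\alpha^{(5d+4)e}$. Next I would use Proposition \ref{proposition:nbe_alpha_cvd}(iii): this leading monomial actually occurs with nonzero coefficient if and only if $P_\ell(\tau_i+\tau_j)\neq 0$ for every pair of distinct roots $\tau_i,\tau_j$ of $L_1(x^{m-1})'$. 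This is precisely where the arithmetic hypothesis $\gcd(r,\ell)\leqslant 2$ enters: by Proposition \ref{proposition:Pl_taui_plus_tauj_non_nul} it is equivalent to that non-vanishing condition. Hence, as a polynomial in $\mathbb{F}_2[a_0,\ldots,a_m][\alpha]$, the leading term $a_0^{2e}a_1^{de}\alpha^{(5d+4)e}$ genuinely appears.

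Now specialize: choose $a_0,\ldots,a_m\in\mathbb{F}_{2^n}$ with $a_0\neq 0$ and $a_1\neq 0$. The coefficient of $\alpha^{(5d+4)e}$ in the resulting univariate polynomial in $\alpha$ is $a_0^{2e}a_1^{de}$, which is nonzero. Therefore $b_0^{de}\Pi_d(L_\alpha f)\in\mathbb{F}_{2^n}[\alpha]$ is a nonzero polynomial of degree exactly $(5d+4)e$, and so has at most $(5d+4)e$ roots in $\overline{\mathbb{F}}_2$. Finally, by construction of $\Pi_d$ (see (\ref{equation_pi})), any $\alpha$ for which $L_\alpha f$ has non-distinct critical values is a root of $\Pi_d(L_\alpha f)$, hence a root of $b_0^{de}\Pi_d(L_\alpha f)$; note that $b_0 = a_1\alpha$ is nonzero on $\overline{\mathbb{F}}_2^{\ast}$ since $a_1 \neq 0$, so multiplying by $b_0^{de}$ introduces no spurious nonzero root. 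The conclusion follows.

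The substantive content has essentially already been verified in the earlier propositions, so the proof itself is short. The only non-routine point is checking that no cancellation destroys the leading monomial, but this has been exactly arranged by Proposition \ref{proposition:Pl_taui_plus_tauj_non_nul}; the genuine obstacle was therefore upstream (understanding when $P_\ell$ vanishes on differences of roots of $L_1(x^{m-1})'$, which required the arithmetic Lemmas \ref{lemma:pgcd} and \ref{lemma:calculs_Pl}), and here we simply harvest those results.
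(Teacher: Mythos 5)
Your proposal is correct and follows essentially the same route as the paper: it combines Proposition \ref{proposition:nbe_alpha_cvd} (parts (ii) and (iii)) with Proposition \ref{proposition:Pl_taui_plus_tauj_non_nul} to see that the leading term $a_0^{2e}a_1^{de}\alpha^{(5d+4)e}$ of $b_0^{de}\Pi_d(L_{\alpha}f)$ survives, then specializes the $a_i$ in $\mathbb{F}_{2^n}$ with $a_0a_1\neq 0$ and bounds the number of bad $\alpha$ by the degree in $\alpha$. Your explicit closing remark that a bad $\alpha$ is a root of $b_0^{de}\Pi_d(L_{\alpha}f)$ (with $b_0=a_1\alpha\neq 0$) is a harmless elaboration of what the paper leaves implicit.
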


\begin{proof}
Fix $r, \ell$ and then $m$ as in the statement of the theorem.
We know by Lemma \ref{lemma:case_of_monomial}
that $L_1(x^{m-1})'$ has $(d-1)/2$ distinct roots 
$\tau_1, \tau_2,\ldots$ in $\overline{\mathbb{F}}_2$
and that by Proposition \ref{proposition:Pl_taui_plus_tauj_non_nul}
if $i \neq j$ then $P_{\ell}(\tau_i + \tau_j) \neq 0$.
Now for $f(x)=\sum_{k=0}^m a_{m-k} x^k \in \mathbb{F}_2 [a_0,\ldots,a_m][x]$
Proposition \ref{proposition:nbe_alpha_cvd} ensures that
$b_0^{de}\Pi_d (L_{\alpha}f)$,
seen  as an element of $\mathbb{F}_2 [a_0,\ldots,a_m][\alpha]$,
has degree exactly $(5d+4)e$ and that its leading term is
$a_0^{2d}a_1^{de} \alpha^{(5d+4)e}$.

Last, if we consider  $n \geqslant 1$
and coefficients $a_i$ in $\mathbb{F}_{2^n}$ 
with $a_0, a_1 \neq 0$,
it follows that 
$b_0^{de}\Pi_d (L_{\alpha}f)$
is a polynomial in $\mathbb{F}_{2^n}[\alpha]$ of degree $(5d+4)e$
whose number of roots is bounded by its degree. 
\end{proof}
 

\section{Proof of the main theorem} \label{subsection:preuve_generale}
We are finally in a position to prove the main result of this paper.

\begin{theorem}\label{theorem:main}
Let  $m=2^{r}(2^{\ell}+1)$ where $\gcd(r,\ell)\leqslant 2$  and  $r \geqslant 2$, $\ell\geqslant 1$.
For $n$ sufficiently large,
for all polynomials $f=\sum_{k=0}^{m}a_{m-k} x^{k}\in{\mathbb F}_{2^n}[x]$
of degree $m$ such that $a_1\not=0$ 
 the differential uniformity $\delta(f)$ is maximal
that is $\delta(f)=m-2$.

In particular, no polynomial
$f=\sum_{k=0}^{m}a_{m-k} x^{k} \in \overline{\mathbb{F}}_2[x]$
of such a degree and
 such that $a_1 \neq 0$
can be  exceptional APN.
\
\end{theorem}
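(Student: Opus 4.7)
The plan is to assemble the quantitative ingredients collected in Sections \ref{section:main} and \ref{section:specific_m}, produce for $n$ large enough an $\alpha \in \mathbb{F}_{2^n}^{*}$ satisfying all four conditions (I.a), (I.b), (I.c) and (II) from Figure \ref{diagram}, and then apply Chebotarev to the splitting field $\Omega$ of $D_\alpha f(x) - t$ over $\mathbb{F}_{2^n}(t)$ to exhibit a $\beta \in \mathbb{F}_{2^n}$ at which $D_\alpha f(x) = \beta$ has exactly $m-2$ roots.

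First, because $f$ has degree exactly $m$ one has $a_0 \neq 0$, and with the hypothesis $a_1 \neq 0$ this permits full use of the previous sections. Since $m \equiv 0 \pmod 4$ and $r \geqslant 2$, the degree $d = (m-2)/2 = 2^{r-1}(2^{\ell}+1) - 1$ of $L_\alpha f$ is odd, so condition (I.c) is automatic. For the three remaining conditions I would argue by counting: Proposition \ref{proposition:ncdp} shows (I.a) fails for at most $(m-1)(m-4)$ values of $\alpha \in \overline{\mathbb{F}}_2$; the last theorem of Section \ref{section:specific_m} shows (I.b) fails for at most $(5d+4)e$ values of $\alpha$; and Proposition \ref{proposition:trace} guarantees at least $\frac{1}{2}(2^n - 2^{n/2+1} - 1)$ values of $\alpha \in \mathbb{F}_{2^n}^{*}$ satisfying (II) in the worst case. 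The first two bounds are constants depending only on $m$ while the third grows like $2^n$, so for $n$ sufficiently large there exists $\alpha \in \mathbb{F}_{2^n}^{*}$ satisfying all four conditions simultaneously.

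For such an $\alpha$, conditions (I.a)--(I.c) make $L_\alpha f$ Morse, so Serre's theorem recalled in Section \ref{subsection:strategy} yields $\overline{G} = G = S_d$, meaning the extension $F/\mathbb{F}_{2^n}(t)$ is regular with geometric monodromy the full symmetric group. Condition (II) then triggers Proposition 4.6 of \cite{AHV} to make $\Omega/F$ regular, so that $\Omega/\mathbb{F}_{2^n}(t)$ is regular with full constant field $\mathbb{F}_{2^n}$. I would then apply Theorem \ref{Chebotarev} with $D = 1$ and the conjugacy class $\mathcal{C} = \{e\}$, which corresponds to first-degree primes $v = t-\beta$ at which $D_\alpha f(x) - \beta$ splits completely into $m-2$ linear factors over $\mathbb{F}_{2^n}$. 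Since the main term $\sharp\mathcal{C}\,q/[\Omega:\mathbb{F}_{2^n}(t)]$ grows like $q = 2^n$ while the error only involves the genus of $\Omega$ (bounded in terms of $m$ alone by Riemann--Hurwitz applied to $L_\alpha f$), for $n$ large enough at least one such $\beta$ exists, giving $\delta(f) \geqslant m-2$. Combined with the trivial upper bound this yields $\delta(f) = m-2$. The exceptional APN statement then follows immediately: since $m \geqslant 2^2(2^1+1) = 12$, we have $m-2 > 2$, so $f$ cannot be APN over $\mathbb{F}_{2^n}$ for any sufficiently large $n$, a fortiori not exceptional APN.

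The main obstacle I anticipate is purely of bookkeeping flavor: making the threshold ``$n$ sufficiently large'' genuinely uniform in the choice of $f$ of degree $m$. This requires checking that the implied constants in both the counting step (size of the exceptional locus for $\alpha$) and the Chebotarev step (genus of $\Omega/\mathbb{F}_{2^n}(t)$ and the degree $[\Omega:\mathbb{F}_{2^n}(t)]$) depend only on $m$ and not on the particular $a_i$. Once this is confirmed, the threshold can be chosen once and for all for a given $m$.
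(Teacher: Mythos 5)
Your proposal is correct and follows essentially the same route as the paper's own proof: the same counting of bad $\alpha$ via Propositions \ref{proposition:trace} and \ref{proposition:ncdp} and the theorem of Section \ref{section:specific_m}, the same passage through the Morse property, the Geyer--Serre form of Hilbert's theorem and Proposition 4.6 of \cite{AHV} to obtain a regular Galois extension $\Omega/\mathbb{F}_{2^n}(t)$, and the same application of Theorem \ref{Chebotarev} to the identity class to produce $\beta$. The uniformity concern you raise is indeed harmless, exactly as the paper handles it: $[\Omega:\mathbb{F}_{2^n}(t)] = d!\,2^{d-1}$ and the genus bound (Lemma 14 of \cite{Pollack}) depend only on $m$, not on the coefficients $a_i$.
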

\begin{proof}
Under the hypotheses of the theorem we
set $m=2^{r}(2^{\ell}+1)$
and $d= (m-2)/2$.
Recall that, in order to apply the  Chebotarev density theorem,
our goal is to obtain a regular extension $\Omega/\mathbb{F}_{2^n}(t)$
and that our strategy 
is to bound the number of $\alpha$ such that the different conditions 
sum up in Figure (\ref{diagram}) fail.

Consider an integer $N_1$
such that $n \geqslant N_1$ implies
\begin{equation}\label{bound_Morse}
 \underbrace{\frac{1}{2} \left( 2^n - 2^{n/2+1}-1\right)}_{
\begin{split}
\scriptsize{\textrm{Lower bound for the } } \\
\scriptsize{\textrm{ number of $\alpha$ s. t.}}\\
\scriptsize{\textrm{condition (II) is satisfied}}
\end{split} 
 } 
 > 
\underbrace{(m-1)(m-4)}_{
\begin{split}
\scriptsize{\textrm{Upper bound for the } } \\
\scriptsize{\textrm{ number of $\alpha$ s. t.}}\\
\scriptsize{\textrm{condition (I.a) fails}}
\end{split} 
} 
+ 
\underbrace{(5d+4)
 \binom{(d-1)/2}{2}}_{
\begin{split}
\scriptsize{\textrm{Upper bound for the }  } \\
\scriptsize{\textrm{ number of $\alpha$ s. t.}}\\
\scriptsize{\textrm{condition (I.b) fails}}
\end{split} 
}
.
\end{equation}
Now fix an integer $n \geqslant N_1$ and 
$f=\sum_{k=0}^{m}a_{m-k}x^{k} \in \F_{2^n}[x]$ of degree $m$
such that $a_1 \neq 0$. 
By Proposition \ref{proposition:Lalpha} the associated polynomial
$L_{\alpha}f$ has odd degree $d$, so condition 
(I.c) is satisfied.
Recall that we have proved in Proposition \ref{proposition:trace}
that there are at least
$\frac{1}{2} \left( 2^n - 2^{n/2+1}-1\right)$
values of $\alpha \in \mathbb{F}_{2^n}$ such that
condition (II) is satisfied.
Moreover, 
by Propositions \ref{proposition:ncdp}
and  \ref{proposition:nbe_alpha_cvd}
we know that
$(m-1)(m-4)$ and 
$(5d+4) \binom{(d-1)/2}{2}$ 
respectively bound
the number of $\alpha$ such that
condition (I.a) and (I.b) fail.
So 
one can choose $ \alpha \in \mathbb{F}_{2^n}^{\ast}$
such that conditions (I.a), (I.b) and (II) are satisfied.

For such a choice of $\alpha$ the polynomial
 $L_{\alpha}f$ has nondegenerate critical points,
 distinct critical values,
  an odd degree,
and so is Morse.
Thus 
Proposition 4.2
in the Appendix of Geyer in \cite{JardenRazon}
(which is a form in even characteristic of the Hilbert theorem)
applies and gives
$G=\overline{G}={\frak{S}}_d$ and consequently
the extension $F / \mathbb{F}_{2^n}(t)$ is 
geometric that is with no constant field extension.

Furthermore,
since there exists $x \in \mathbb{F}_{2^n}$ such that $x^2+\alpha x = b_1/b_0$,
 Proposition 4.6 in \cite{AHV}
yields
 $\Gamma=\overline{\Gamma}=(\mathbb{Z} / 2 \mathbb{Z})^{d-1}$ and
the second extension $\Omega / F$ is also
geometric.

Moreover the extension  $\Omega / {\mathbb F}_{2^n}(t)$
is separable since $L_{\alpha}f$ has odd degree and 
is obviously normal as a decomposition field,
so we finally deduce that the 
extension $\Omega / \mathbb{F}_{2^n}(t)$ is a regular Galois extension.

We are now in a position to
 use
the explicit Chebotarev density theorem 
stated as Theorem \ref{Chebotarev}
applied to the regular 
Galois extension  $\Omega / \mathbb{F}_{2^n}(t)$.
Let $V$
 denotes the number of
places  of degree 1
of ${\mathbb F}_{2^n}(t)$
which totally split in 
$\Omega$.
Then denoting by
$d_{\Omega}$ the degree of $\Omega$ over $\mathbb{F}_{2^n}(t)$
and $g_{\Omega}$ the genus of $\Omega$ we get the following lower bound
\begin{equation}\label{bound_Chebotarev}
V \geqslant \frac{{2^n}}{d_{\Omega}}-
\frac{2}{d_{\Omega}}\bigl(g_{\Omega}2^{n/2} + g_{\Omega} + d_{\Omega}\bigr).
\end{equation}
Since $G=\frak{S}_d$ and 
$\Gamma=(\mathbb{Z}/2 \mathbb{Z})^{d-1} $
we have $d_{\Omega} = d! 2^{d-1}$.
Furthermore
$g_{\Omega}\leqslant \frac{1}{2}(\deg D_{\alpha}f-3)d_{\Omega}  +1$
by Lemma 14 of \cite{Pollack} so
$g_{\Omega}\leqslant  d! 2^{d-1} (d-3/2)   +1$.
We deduce the existence of an integer $N_2$  
beyond which 
$V \geqslant 1$.
Thus if $n$ also satisfies $n \geqslant N_2$
 there exists $\beta \in \mathbb{F}_{2^n}$ such that
$D_{\alpha}f(x)=\beta$ has $m-2$ distinct (simple) roots
and $\delta(f)$ is maximal.

As a  straightforward consequence 
we have shown that polynomials of degree $m$ 
with coefficients in a
finite extension of $\mathbb{F}_2$
with a nonzero second leading coefficient cannot be exceptional APN.
\end{proof}

\begin{remark}
Let us make explicit the expression {\sl for $n$ sufficiently large} employed in Theorem \ref{theorem:main}.
As a matter of example, we consider the case of a polynomial $f$ of degree $m=12$  with a nonzero second leading coefficient.
In this case $L_{\alpha}f$ has degree $d=5$, Condition (\ref{bound_Morse}) leads to take $N_1=9$ whereas Condition 
(\ref{bound_Chebotarev}) yields $N_2=28$.
Thus, for $n\geqslant 28$ we have $\delta_{{\mathbb F}_{2^n}}(f)=10$.
As a corollary no polynomial of degree 12 with a nonzero second leading coefficient defined over a finite field of characteristic 2 can be
 exceptional APN. 
\end{remark}

\bigskip
\noindent
{\bf Acknowledgments.}
The authors would like to thank Jos\'e Felipe Voloch for  the proof of Proposition \ref{proposition:trace}.


\bigskip
\bibliographystyle{plain}

\end{document}